\newtheorem{thm}{Theorem}[section]
\newtheorem{cor}[thm]{Corollary}
\theoremstyle{definition}
\newtheorem{defin}[thm]{Definition}
\numberwithin{equation}{section}
\begin{document}


\baselineskip=17pt


\title{On the Axiomatic Systems of Singular Cohomology Theory}

\author{Anzor Beridze\\
Department of Mathematics\\
Batumi Shota Rustaveli State University\\
35, Ninoshvili St., Batumi 6010, Georgia\\ 
E-mail: a.beridze@bsu.edu.ge
\and 
Leonard Mdzinarishvili\\
Department of Mathematics\\
Georgian Technical University\\
77, Kostava St., Tbilisi 0171, Geogia\\
E-mail: l.mdzinarishvili@gtu.ge}

\date{}

\maketitle


\renewcommand{\thefootnote}{}

\footnote{2010 \emph{Mathematics Subject Classification}: 	55N07; 	55N40.}

\footnote{\emph{Key words and phrases}:  Partial Compact Support, Nontrivial Internal Extension, the Uniqueness Theorem, the Universal Coefficients Formula, Injective Group.}

\renewcommand{\thefootnote}{\arabic{footnote}}
\setcounter{footnote}{0}


\begin{abstract}
On the category of pairs of topological spaces having a homotopy type of $CW$ complexes the singular (co)homology theory was axiomatically studied by J.Milnor \cite{8}. In particular, Milnor gave {\it additivity axiom} for a (co)homology theory and proved that any additive (co)homology theory on the given category is isomorphic to the singular (co)homology. On the other hand,  the singular homology is a homology with compact support \cite{3}. In the paper \cite{6},  L. Mdzinarishvili proposed {\it partially compact support property} for a cohomology theory and gave another axiomatic characterization of the singular cohomology theory  \cite{6}. In this paper, we will give  additional different axiomatic characterizations of the singular cohomology theory. Moreover, we will study connections of the mentioned axiomatic systems. 

\end{abstract}

\section{Introduction}

Let $ \mathcal{K}^2_{Top}$ be the category of pairs of topological spaces,  $\mathcal{K}^2$ be any admissible subcategory \cite{3} of the category $\mathcal{K}^2_{Top}$ and $ {\mathcal Ab}$ be the category of abelian groups.

A sequence $H^*=\{H^n, \delta \}_{n \in \mathbb{Z}}$ of contravariant functors $ H^n: \mathcal{K}^2 \to Ab$ is called a cohomological \cite{3} if:

$1_H$) for each object $(X,A) \in {\mathcal{K} ^2}$, $G \in {\mathcal Ab}$ and $n \in \mathbb{Z}$ there exists a $\delta$-homomorphism
\begin{equation}
\delta :H^{n-1}(A;G) \rightarrow H^n(X,A;G),
\end{equation}
where $H^{n-1}(A;G)\equiv H^{n-1}(A, \emptyset ;G)$.

$2_H$) the diagram  
\begin{equation}
\begin{tikzpicture}

\node (A) {${H}^{n-1}(B;G)$};
\node (B) [node distance=4cm, right of=A] {$H^{n}(Y,B;G)$};

\node (A1) [node distance=2cm, below of=A] {${H}^{n-1}(A;G)$};
\node (B1) [node distance=2cm, below of=B] {$H^{n}(X,A;G)$};

\draw[->] (A) to node [above]{$\delta$}(B);

\draw[->] (A1) to node [above]{$\delta$}(B1);

\draw[->] (A) to node [right]{$(f_{|A})^*$}(A1);

\draw[->] (B) to node [right]{$f^*$}(B1);

\end{tikzpicture}
\end{equation}
is commutative for each continuous mapping  $f: (X,A) \to (Y,B)$ ($f^* : H^n(Y,B;G) \to H^n(X,A;G)$ and $(f_{|A})^*  : H^{n-1}(B;G) \to H^{n-1}(A;G)$ are  the homomorphisms induced by $f: (X,A) \to (Y,B)$ and $f_{|A}:A \to B$, correspondingly).

A cohomological sequence $H^*=\{ H^n , \delta \}_{n \in \mathbb{Z}}$ is called the  cohomology theory in the Eilenberg-Steenrod sense on the category  $\mathcal{K}^2$ if it satisfies the homotopy, excision, exactness and dimension axioms for the given $\mathcal{K}^2$ category \cite{3}. It is known that up to an isomorphism such a cohomology theory is unique on the category $\mathcal{K}^2_{\mathcal{P}ol_C}$ of pairs of compact polyhedrons \cite{3}. Analogously, is defined  the unique homology theory $H_*=\{ H_n , \partial \}_{n \in \mathbb{Z}} $ on the category $\mathcal{K}^2_{\mathcal{P}ol_C}$ \cite{3}.

Let $\mathcal{K}^2_{\mathcal{CW}}$ be the category of pairs of topological spaces having a homotopy type of $CW$ complexes. The singular (co)homology theory on the category $\mathcal{K}^2_{\mathcal{CW}}$ was first axiomatically described by J. Milnor \cite{8}. He proved the uniqueness theorem using the Eilenberg-Steenrod axioms and {\it the  additivity axiom}:

AD ({\it Additivity axiom}): If $X$ is the disjoint union of open subspaces $X_{\alpha}, ~ \alpha \in \mathcal{A}$ with inclusion maps $i_{\alpha}:X_{\alpha} \to X$, all belonging to the category $\mathcal{K}^2_{\mathcal{CW}}$, then the homomorphisms $i_{\alpha  }^*:H^n(X;G) \to H^n(X_{\alpha};G)$ ($i_{\alpha *}:H_n(X_{\alpha};G) \to H_n(X;G)$) must provide a projective (an injective) representation of $H^n(X;G)$ ($H_n(X;G)$) as a direct product (a direct sum).

In \cite{8} the following is proved:
\begin{thm} (see the Uniqueness Theorem in \cite{8}) Let $H_*$ be an additive homology theory on the category $\mathcal{K}^2_{\mathcal{CW}}$ with coefficients group $G$. Then for each $(X,A)$ in $\mathcal{K}^2_{\mathcal{CW}}$ there is a natural isomorphism between $H_n(X,A;G)$ and the  n-th singular homology group  of $(X,A)$ with coefficients in $G$.
\end{thm}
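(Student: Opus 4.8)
The plan is to construct, for every pair $(X,A)$ in $\mathcal{K}^2_{\mathcal{CW}}$, a natural homomorphism $T_{(X,A)}\colon \bar H_n(X,A;G)\to H_n(X,A;G)$ from the singular theory $\bar H_*$ to the given additive theory $H_*$, compatible with the connecting homomorphisms, and then to prove that each $T_{(X,A)}$ is an isomorphism. The normalization is forced by the dimension axiom, $H_0(\mathrm{pt};G)\cong G\cong\bar H_0(\mathrm{pt};G)$; composing the connecting isomorphisms of the pairs $(D^k,S^{k-1})$ with the exactness and excision axioms produces, for both theories, canonical identifications $\tilde H_k(S^k;G)\cong G$ (here $\tilde H$ denotes reduced homology, $\tilde H_n(Z)=H_n(Z,\mathrm{pt})$). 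I would first handle an absolute CW complex $X$ and then bootstrap to pairs.

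For a CW complex $X$ with skeletal filtration $\{X^q\}$, form the cellular chain complexes $C^{H}_q(X)=H_q(X^q,X^{q-1};G)$ and $C^{\bar H}_q(X)=\bar H_q(X^q,X^{q-1};G)$, with the usual differentials built from the connecting maps and the inclusions $X^{q-1}\hookrightarrow(X^{q-1},X^{q-2})$. Two facts are needed here, and both rest on additivity. First, $X^q/X^{q-1}$ is a wedge of $q$-spheres indexed by the $q$-cells, so additivity together with excision gives $H_q(X^q,X^{q-1};G)\cong\bigoplus_{\alpha}\tilde H_q(S^q_\alpha;G)$, concentrated in degree $q$, and likewise for $\bar H$; the canonical sphere identifications above then yield an isomorphism $C^{\bar H}_*(X)\cong C^{H}_*(X)$, and it is a chain map because the two differentials are given by the same integer matrices of attaching‑map degrees, which are intrinsic to $X$. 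Second, the cellular homology with respect to $H_*$ must compute $H_*(X)$: for finite‑dimensional $X$ this is the classical Eilenberg--Steenrod argument (induction up the skeleta via the long exact sequences of the triples $(X^{q},X^{q-1},X^{q-2})$ and the five lemma, using that $\tilde H_*$ of a wedge of $q$-spheres lives only in degree $q$); for infinite‑dimensional $X$ one must moreover know that $H_n(X)=\varinjlim_q H_n(X^q)$, which I would get by replacing $X$ by the mapping telescope of $X^0\hookrightarrow X^1\hookrightarrow\cdots$, splitting the telescope into two subspaces each homotopy equivalent to a disjoint union of skeleta whose intersection is again a disjoint union of skeleta, and applying the Mayer--Vietoris sequence (a formal consequence of exactness and excision) together with additivity to identify $H_n(\mathrm{Tel})$; since the cellular maps $H_n(X^q)\to H_n(X^{q+1})$ are isomorphisms for $q>n$, the relevant $\varprojlim^{1}$ vanishes. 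Composing the three isomorphisms gives $\bar H_*(X)\cong H_*\bigl(C^{\bar H}_*(X)\bigr)\cong H_*\bigl(C^{H}_*(X)\bigr)\cong H_*(X)$, naturally in $X$.

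To pass to a pair $(X,A)$ with $A$ a subcomplex of $X$, use that $A\hookrightarrow X$ is a cofibration, so $H_n(X,A;G)\cong\tilde H_n(X/A;G)$ and similarly for $\bar H$, reducing to the absolute case; the five lemma applied to the morphism of long exact sequences of $(X,A)$ then shows $T_{(X,A)}$ is an isomorphism. An arbitrary object of $\mathcal{K}^2_{\mathcal{CW}}$ only has the homotopy type of a CW pair, so the homotopy axiom extends the conclusion to it. Naturality is checked by the standard observation that every map of CW pairs is homotopic to a cellular one, cellular maps preserve skeleta and hence induce maps of cellular chain complexes, and all comparison isomorphisms were manufactured from the canonical sphere generators, so $T$ is a natural transformation of homology theories.

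The step I expect to be the real obstacle is the passage from the finite‑dimensional to the general CW case, that is, proving $H_n(X)=\varinjlim_q H_n(X^q)$ — equivalently, that $\tilde H_n$ of an infinite wedge of spheres is the direct sum of the summands. This is exactly where the additivity axiom is indispensable and is the only place where the argument genuinely leaves the classical compact setting; everything else is bookkeeping with the Eilenberg--Steenrod axioms and the five lemma.
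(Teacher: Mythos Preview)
The paper does not supply its own proof of this theorem; it is stated as a quotation of Milnor's Uniqueness Theorem from \cite{8}, with the proof left to that reference. Your proposal is correct and is precisely Milnor's original argument: build the comparison via cellular chains, use additivity to identify $H_q(X^q,X^{q-1};G)$ with a direct sum over cells, and handle the infinite-dimensional case by the mapping-telescope/Mayer--Vietoris trick to obtain $H_n(X)\cong\varinjlim_q H_n(X^q)$; so your write-up matches the source the paper invokes.
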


At the end of the paper \cite{8} it is mentioned that {\it "the corresponding theorem for cohomology groups can be proved in the same way"}, which can be formulated in the following way:
\begin{thm} Let $H^*$ be an additive cohomology theory on the category $\mathcal{K}^2_{\mathcal{CW}}$ with coefficients group $G$. Then for each $(X,A)$ in $\mathcal{K}^2_{\mathcal{CW}}$ there is a natural isomorphism between $H^n(X,A;G)$ and the n-th singular cohomology group of $(X,A)$ with coefficients in $G$.
\end{thm}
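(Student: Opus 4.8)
The plan is to carry out the argument of \cite{8} for the homological uniqueness theorem (Theorem~1.1) with all arrows reversed; the single genuinely new feature is that direct sums must be replaced by direct products and that the passage from finite to arbitrary $CW$ complexes acquires a $\varprojlim^{1}$-term. Write $\bar H^{*}(-;G)$ for singular cohomology with coefficients in $G$. The whole statement follows once one exhibits a natural transformation $T\colon\bar H^{*}\to H^{*}$ defined on all of $\mathcal K^{2}_{\mathcal{CW}}$ (pairs included) and commuting with the coboundary $\delta$, and shows that it is an isomorphism on absolute complexes: the relative case is then deduced from the exactness axiom and the five lemma, while the normalisation of $T$ (so as to be the identity of $G$ under the dimension axiom) is convenient but immaterial to the statement.

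To construct $T$ and, simultaneously, to settle the finite-dimensional case, I would use the cellular method. The dimension axiom fixes identifications $H^{n}(S^{n};G)\cong G\cong\bar H^{n}(S^{n};G)$ compatible with suspension; since a $CW$ pair is a good pair, excision and homotopy give $H^{n}\bigl(X^{(n)},X^{(n-1)};G\bigr)\cong\widetilde H^{n}\bigl(X^{(n)}/X^{(n-1)};G\bigr)$, and as $X^{(n)}/X^{(n-1)}\simeq\bigvee_{\alpha}S^{n}_{\alpha}$ the additivity axiom yields
\[
H^{n}\bigl(X^{(n)},X^{(n-1)};G\bigr)\;\cong\;\prod_{\alpha}H^{n}(S^{n};G)\;\cong\;\prod_{\alpha}G ,
\]
the group of cellular $n$-cochains, while the connecting homomorphisms of the triples $(X^{(n+1)},X^{(n)},X^{(n-1)})$ recover the cellular coboundary (the incidence numbers being degrees of attaching maps, read through the identifications above). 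Hence the cellular cochain complexes built from $H^{*}$ and from $\bar H^{*}$ are canonically isomorphic, naturally in $X$ by cellular approximation. For $X$ finite-dimensional, the exact sequences of the skeletal filtration --- together with $H^{k}(\bigvee S^{m};G)=0$ for $k\ne m$, again from additivity and the dimension axiom --- identify $H^{n}(X;G)$ with the $n$-th cohomology of this complex, and likewise $\bar H^{n}(X;G)$; this produces the natural isomorphism $T$ on finite-dimensional complexes, and the same bookkeeping on relative cellular cochains gives it for finite-dimensional pairs.

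For an arbitrary $X=\bigcup_{m}X^{(m)}$ I would pass to the limit along the skeletal tower by means of the Milnor exact sequence
\[
0\longrightarrow{\varprojlim_{m}}^{1}\,H^{n-1}\bigl(X^{(m)};G\bigr)\longrightarrow H^{n}(X;G)\longrightarrow\varprojlim_{m}H^{n}\bigl(X^{(m)};G\bigr)\longrightarrow 0 ,
\]
which holds for \emph{any} additive cohomology theory: replace $X$ by the mapping telescope of $X^{(0)}\subseteq X^{(1)}\subseteq\cdots$, split it into its even and odd sub-telescopes --- each homotopy equivalent to a disjoint union of skeleta, whose cohomology additivity turns into a product --- and insert the result into the Mayer--Vietoris sequence coming from the exactness and excision axioms. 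The same sequence holds for $\bar H^{*}$, the transformation $T$ is compatible with both, and $T$ is already an isomorphism on each $X^{(m)}$ by the previous step; therefore it induces isomorphisms on the $\varprojlim$- and $\varprojlim^{1}$-terms, and the five lemma gives that it is an isomorphism on $H^{n}(X;G)$. The relative case then follows as in the first paragraph. (Here the tower $\{H^{n}(X^{(m)};G)\}_{m}$ is in fact eventually constant, so its $\varprojlim^{1}$ vanishes and $H^{n}(X;G)\cong H^{n}(X^{(n+1)};G)$; but it is the telescope argument that makes this legitimate for an abstract theory.)

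The main obstacle is precisely this passage to the inverse limit. In the homological Theorem~1.1 it is enough that the theory commute with the \emph{direct} limit over finite subcomplexes, which additivity supplies at once; in cohomology one must instead control an inverse limit, and it is again the additivity axiom --- now producing the \emph{products} arising from the disjoint unions inside the telescope --- that makes the Milnor sequence available and annihilates the would-be obstruction $\varprojlim^{1}$. Everything else --- excision for $CW$ pairs, Mayer--Vietoris, naturality via cellular approximation, compatibility of $T$ with $\delta$, and the five-lemma reduction of the relative case --- is routine once the absolute statement is in place.
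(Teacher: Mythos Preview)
The paper does not supply its own proof of this theorem: it merely quotes Milnor's remark that ``the corresponding theorem for cohomology groups can be proved in the same way'' and states the result. Your proposal is precisely a detailed execution of that dualized Milnor argument --- cellular identification of the cochain complex via additivity, the skeletal reduction in finite dimensions, and the mapping-telescope derivation of the $\varprojlim^{1}$ sequence to pass to arbitrary complexes --- so you are following exactly the route the paper points to, and the argument is correct.

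One small point worth tightening: you construct $T$ on finite-dimensional complexes and then invoke the five lemma on the Milnor sequences, but for that you need $T$ already defined on the infinite-dimensional $X$ and compatible with restriction to skeleta. This is harmless --- the cellular-cochain identification you set up works for arbitrary $X$ at once, giving a natural map from $H^{n}(X;G)$ to cellular cohomology (and likewise for $\bar H^{*}$), and the telescope argument then shows both maps are isomorphisms --- but it would read more cleanly if you said so explicitly rather than letting the reader infer where $T$ on $X$ comes from.
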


Another axiomatic characterization of the singular cohomology theory was given by L. Mdzinarishvili. In particular, in  \cite{6} a cohomology theory with a {\it partially compact support} was defined:

\begin{defin}({\it see Definition 2 in \cite{6}}) A cohomology theory $H^*$ for which there is a finite exact sequence
\begin{equation}
0  \to {\varprojlim}^{2n-3} H^1 F_{\alpha} \to \dots \to {\varprojlim}^{1} H^{n-1} F_{\alpha} \to H^n X \to
\end{equation}
$$\to {\varprojlim} H^n F_{\alpha} \to {\varprojlim}^{2} H^{n-1} F_{\alpha} \to \dots \to {\varprojlim}^{2n-2} H^1 F_{\alpha} \to 0,$$
where  $\mathbf{F}=\{ F_\alpha \}_{\alpha \in \mathcal{A}}$ is the  direct system of all compact subspaces $F_{\alpha}$ of $X$  directed by the inclusion, is called a cohomology theory with partially compact supports.
\end{defin}

Using the partially compact support property, the following results are obtained  \cite{6}:

\begin{thm}({\it see Theorem 4 in \cite{6}}) For the singular cohomology of any topological space $X$ there is a finite exact sequence 
\begin{equation}
0  \to {\varprojlim}^{2n-3} H^1_s F_{\alpha} \to \dots \to {\varprojlim}^{1} H^{n-1}_s F_{\alpha} \to H^n_s X \to
\end{equation}
$$\to {\varprojlim} H^n_s F_{\alpha} \to {\varprojlim}^{2} H^{n-1}_s F_{\alpha} \to \dots \to {\varprojlim}^{2n-2} H^1_s F_{\alpha} \to 0,$$
where $H^*_s F_{\alpha}=H^*_s (F_{\alpha}; G)$,  $H^*_s X=H^*_s (X; G).$
\end{thm}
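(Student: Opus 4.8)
The plan is to combine three facts about singular (co)homology: it has compact supports on homology, it satisfies the Universal Coefficients Formula, and the derived functors of $\varprojlim$ interact with $\mathrm{Ext}$ over $\mathbb{Z}$ in a very rigid way because $\mathbb{Z}$ is hereditary. First I would record the elementary fact that every singular simplex $\sigma\colon\Delta^q\to X$ has compact image and that a singular chain involves only finitely many simplices, so that the canonical map $\varinjlim C^s_*(F_\alpha;\mathbb{Z})\to C^s_*(X;\mathbb{Z})$ is an isomorphism of chain complexes, where $\mathbf F=\{F_\alpha\}$ is the direct system of all compact subspaces of $X$ ordered by inclusion (no hypothesis on $X$ is needed). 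Since filtered colimits are exact, $H^s_q(X;\mathbb{Z})\cong\varinjlim H^s_q(F_\alpha;\mathbb{Z})$ for all $q$. As $C^s_*(Y;\mathbb{Z})$ is a complex of free abelian groups, the Universal Coefficients Formula gives, naturally in $Y$, short exact sequences $0\to\mathrm{Ext}(H^s_{q-1}Y,G)\to H^q_s(Y;G)\to\mathrm{Hom}(H^s_qY,G)\to 0$, which I would apply both to $Y=X$ and to every $Y=F_\alpha$. Substituting the colimit formula into the one for $Y=X$ reduces everything to understanding $\mathrm{Hom}(\varinjlim H^s_nF_\alpha,G)$ and $\mathrm{Ext}(\varinjlim H^s_{n-1}F_\alpha,G)$ in terms of the groups $H^q_s(F_\alpha;G)$.

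The algebraic heart is the following. For any direct system $\{A_\alpha\}_{\alpha\in\mathcal A}$ of abelian groups over a directed index set $\mathcal A$, the identity $\mathrm{Hom}(\varinjlim A_\alpha,G)=\varprojlim\mathrm{Hom}(A_\alpha,G)$ together with exactness of $\varinjlim$ yields a Grothendieck spectral sequence ${\varprojlim}^{p}\mathrm{Ext}^q(A_\alpha,G)\Rightarrow\mathrm{Ext}^{p+q}(\varinjlim A_\alpha,G)$. Since $\mathbb{Z}$ is hereditary this spectral sequence has only the two rows $q=0,1$, and since its abutment vanishes in all total degrees $\ge 2$, the single family of differentials $d_2$ forces isomorphisms ${\varprojlim}^{p}\mathrm{Ext}^1(A_\alpha,G)\cong{\varprojlim}^{p+2}\mathrm{Hom}(A_\alpha,G)$ for every $p\ge 1$, together with a four-term exact sequence $0\to{\varprojlim}^{1}\mathrm{Hom}(A_\alpha,G)\to\mathrm{Ext}(\varinjlim A_\alpha,G)\to\varprojlim\mathrm{Ext}(A_\alpha,G)\to{\varprojlim}^{2}\mathrm{Hom}(A_\alpha,G)\to 0$. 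The structural point is that $d_2$ preserves the parity of $p$, so it keeps the even and odd towers of derived limits apart.

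It then remains to feed the Universal Coefficients sequences for the $F_\alpha$ — regarded as short exact sequences of inverse systems $0\to\{\mathrm{Ext}(H^s_{q-1}F_\alpha,G)\}\to\{H^q_sF_\alpha\}\to\{\mathrm{Hom}(H^s_qF_\alpha,G)\}\to 0$, with their long exact sequences of the functors ${\varprojlim}^{p}$ — into the constructions above, and to carry out a downward induction on the homological degree inside the resulting braid of exact sequences. Each step uses a $d_2$-isomorphism to replace a ${\varprojlim}^{p}$ of an $\mathrm{Ext}$ in homological degree $q$ by a ${\varprojlim}^{p+2}$ of a $\mathrm{Hom}$ in degree $q$, which the Universal Coefficients sequence for $F_\alpha$ identifies with a contribution coming from $H^q_s(F_\alpha;G)$; the same sequence then lowers the homological degree from $q$ to $q-1$ at the cost of one further $\mathrm{Ext}$-term. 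Starting from the quotient $\mathrm{Hom}(\varinjlim H^s_nF_\alpha,G)$ of $H^n_sX$ one produces, in the even tower, the terms $\varprojlim H^n_sF_\alpha,\ {\varprojlim}^{2}H^{n-1}_sF_\alpha,\ \dots$ to the right of $H^n_sX$, while starting from the subgroup ${\varprojlim}^{1}\mathrm{Hom}(H^s_{n-1}F_\alpha,G)$ of $\mathrm{Ext}(\varinjlim H^s_{n-1}F_\alpha,G)\subseteq H^n_sX$ one produces, in the odd tower, the terms $\dots,\ {\varprojlim}^{3}H^{n-2}_sF_\alpha,\ {\varprojlim}^{1}H^{n-1}_sF_\alpha$ to the left. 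The recursion stops in homological degree $1$, since $H^s_0F_\alpha$ is free, so $\mathrm{Ext}(H^s_0F_\alpha,G)=0$ and $H^1_s(F_\alpha;G)\cong\mathrm{Hom}(H^s_1F_\alpha,G)$; hence the extreme terms are ${\varprojlim}^{2n-2}H^1_sF_\alpha$ on the right and ${\varprojlim}^{2n-3}H^1_sF_\alpha$ on the left, and the sequence has exactly the displayed length. Naturality of the Universal Coefficients Formula, of the compact-support isomorphism, and of the spectral sequence in $X$ makes the whole sequence natural; this is precisely what it means for $H^*_s$ to have partially compact supports.

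I expect the genuine work to be twofold. First, because $\mathbf F$ is typically uncountable one cannot replace the whole tower ${\varprojlim}^{p}$ by ${\varprojlim}^{1}$ and a Milnor-type sequence: the higher derived limits must be controlled, and the convergence and $d_2$-degeneration of the spectral sequence verified. Here it is essential that $\mathbb{Z}$ is hereditary — this kills $\mathrm{Ext}^{\ge 2}$, separates the parity towers through $d_2$, and, above all, makes the otherwise unbounded recursion terminate after finitely many steps, which is exactly what yields the finite bound $2n-2$. Second comes the bookkeeping of the braid: one has to check that the connecting homomorphisms compose and cancel so that, after all the splicings, precisely the displayed terms survive and the two parity towers attach to $H^n_sX$ on the correct sides with the asserted maps. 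The rest is formal.
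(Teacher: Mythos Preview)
The paper does not contain a proof of this statement. Theorem~1.4 is quoted in the Introduction from reference~[Mdz$_1$] as background, with the explicit attribution ``see Theorem~4 in~[Mdz$_1$]'', and no argument for it is supplied here. So there is nothing in the present paper to compare your proposal against.

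That said, your outline is the correct strategy and is consistent with the algebraic tools the paper invokes elsewhere. The four-term exact sequence
\[
0\to{\varprojlim}^{1}\mathrm{Hom}(A_\alpha,G)\to\mathrm{Ext}(\varinjlim A_\alpha,G)\to\varprojlim\mathrm{Ext}(A_\alpha,G)\to{\varprojlim}^{2}\mathrm{Hom}(A_\alpha,G)\to 0
\]
and the isomorphisms ${\varprojlim}^{p}\mathrm{Ext}(A_\alpha,G)\cong{\varprojlim}^{p+2}\mathrm{Hom}(A_\alpha,G)$ for $p\ge 1$ that you extract from the two-row Grothendieck spectral sequence are precisely Proposition~1.2 and part of Corollary~1.5 of Huber--Meier~[H-M], which the present paper cites repeatedly (see the displays~(3.34), (3.40), (3.47)). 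The compact-support isomorphism $H^s_*(X)\cong\varinjlim H^s_*(F_\alpha)$ and the Universal Coefficients input are standard and used exactly as you indicate. The termination argument via $\mathrm{Ext}(H^s_0F_\alpha,G)=0$ is also right.

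The one place where your sketch remains genuinely incomplete is the ``braid'' splicing you describe in the third paragraph: you name the induction but do not exhibit the maps, nor do you verify exactness at each node after the successive replacements. This is where the real work in~[Mdz$_1$] sits, and it is not quite as formal as the last sentence of your proposal suggests; one has to check that the long ${\varprojlim}^*$-sequence of the inverse system of Universal Coefficients short exact sequences and the $d_2$-isomorphisms splice compatibly so that the intermediate $\mathrm{Hom}/\mathrm{Ext}$ terms cancel and only the ${\varprojlim}^{2k}H^{n-k}_sF_\alpha$ and ${\varprojlim}^{2k-1}H^{n-k}_sF_\alpha$ survive. If you intend a self-contained proof, that step should be written out; otherwise cite~[Mdz$_1$] as the paper does.
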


\begin{cor}({\it see Corollary 1 in \cite{6}}) If $X$ is a polyhedron and $\{ F_{\alpha} \}$ is a system of compact subspaces of $X$, then for the singular cohomology there is a short exact sequence 
\begin{equation}
0  \to {\varprojlim}^{1} H^{n-1}_s F_{\alpha} \to H^n_s X \to {\varprojlim} H^n_s F_{\alpha} \to 0.
\end{equation}

\end{cor}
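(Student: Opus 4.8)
The plan is to deduce the Corollary directly from Theorem 1.5. In the finite exact sequence of that theorem the only terms that are \emph{not} of the form ${\varprojlim}^{k} H^{q}_{s} F_{\alpha}$ with $k\ge 2$ are the three ``middle'' ones ${\varprojlim}^{1} H^{n-1}_{s} F_{\alpha}$, $H^{n}_{s} X$ and $\varprojlim H^{n}_{s} F_{\alpha}$ (the terms to the left of $H^{n}_{s}X$ carry the limit-indices $2n-3, 2n-5,\dots,3,1$ and those to the right carry $0,2,4,\dots,2n-2$). Hence it is enough to prove that, when $X$ is a polyhedron, ${\varprojlim}^{k} H^{q}_{s} F_{\alpha}=0$ for every $q$ and every $k\ge 2$; granting this, every term outside the three middle ones vanishes and the sequence of Theorem 1.5 collapses to
\[
0 \to {\varprojlim}^{1} H^{n-1}_{s} F_{\alpha} \to H^{n}_{s} X \to \varprojlim H^{n}_{s} F_{\alpha} \to 0 ,
\]
with the maps induced from Theorem 1.5, so that naturality in $X$ is automatic.

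For the vanishing I would argue as follows: in a polyhedron every compact subspace is contained in a finite subpolyhedron, so the finite subpolyhedra of $X$ form a cofinal subsystem of $\mathbf{F}=\{F_{\alpha}\}$, and passage to a cofinal subsystem changes neither $\varprojlim$ nor any ${\varprojlim}^{k}$. The directed set of finite subpolyhedra of a (countable) triangulation of $X$ is itself countable, hence has a cofinal sequence, and it is a classical fact (Goblot; see also Jensen, B.\,Mitchell) that over a directed set admitting a countable cofinal subset the functors ${\varprojlim}^{k}$ vanish for all $k\ge 2$. Applying this to the inverse systems $\{H^{q}_{s} F_{\alpha}\}_{\alpha}$ yields the required vanishing, and the collapse above finishes the argument.

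It is worth recording a more self-contained variant that bypasses Theorem 1.5. Since $X$ carries the weak topology determined by its finite subpolyhedra, the singular chain complex satisfies $S_{*}(X)=\varinjlim_{\alpha} S_{*}(F_{\alpha})$, whence $S^{*}(X;G)=\operatorname{Hom}\bigl(S_{*}(X),G\bigr)=\varprojlim_{\alpha} S^{*}(F_{\alpha};G)$; moreover each transition homomorphism $S^{q}(F_{\alpha};G)\to S^{q}(F_{\beta};G)$ is an epimorphism because singular chain groups are free abelian, so the tower of cochain complexes satisfies the Mittag-Leffler condition and the Milnor ${\varprojlim}^{1}$ sequence produces exactly $0\to{\varprojlim}^{1}H^{n-1}_{s}F_{\alpha}\to H^{n}_{s}X\to\varprojlim H^{n}_{s}F_{\alpha}\to 0$. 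Either way the crux — and the one place where the hypothesis that $X$ is a polyhedron really enters — is the reduction to a countable cofinal family of compact subspaces, equivalently the vanishing of the higher derived limits. This is the step I expect to be the main obstacle: for a general, possibly uncountable, polyhedron it would need a separate argument, so some care is required about the precise class of polyhedra for which the statement is meant.
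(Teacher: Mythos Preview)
Your deduction from Theorem~1.4 is structurally correct, and you have put your finger on exactly the point where your argument is incomplete: the vanishing of ${\varprojlim}^{k}$ for $k\ge 2$ that you obtain via a \emph{countable} cofinal subsystem only works for polyhedra with countable triangulations, whereas the statement is meant to hold for arbitrary polyhedra.

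The paper itself does not give a proof of this corollary (it is quoted from \cite{6}), but the argument the authors use later to derive the analogous sequence~(3.44) in the proof of Theorem~3.3 shows the intended route, and it resolves precisely the gap you flagged. Instead of exploiting countability of the index set, one uses the universal coefficients formula on each compact $F_\alpha$ (which has the homotopy type of a finite polyhedron) to get the inverse system of short exact sequences
\[
0 \to Ext\bigl(H_{n-1}(F_\alpha),G\bigr) \to H^{n}_{s}(F_\alpha;G) \to Hom\bigl(H_{n}(F_\alpha),G\bigr) \to 0,
\]
and then invokes Corollary~1.5 of Huber--Meier \cite{4}: because the groups $H_{*}(F_\alpha)$ are \emph{finitely generated}, one has ${\varprojlim}^{r}Ext(H_{*}(F_\alpha),G)=0$ for $r\ge 1$ and ${\varprojlim}^{2}Hom(H_{*}(F_\alpha),G)=0$, over an \emph{arbitrary} directed index set. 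Feeding this into the long exact $\varprojlim$-sequence yields ${\varprojlim}^{k}H^{q}_{s}(F_\alpha;G)=0$ for $k\ge 2$ without any countability hypothesis, and the collapse of the sequence in Theorem~1.4 follows as you described.

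So the missing ingredient is not a cofinality/countability argument but the Huber--Meier vanishing results, whose input is finite generation of the homology of the compact pieces rather than any cardinality restriction on the polyhedron. Your alternative ``Milnor ${\varprojlim}^{1}$'' argument suffers from the same issue: the short exact sequence for the cohomology of an inverse limit of cochain complexes is standard for \emph{towers}, but for a general directed system one is back to a spectral sequence whose collapse to a three-term sequence again requires ${\varprojlim}^{k}=0$ for $k\ge 2$; surjectivity of the transition maps on cochains alone does not give this.
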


\begin{thm} ({\it see Theorem 5 in \cite{6}}) Let $h$ be a homomorphism from cohomology $H$ to cohomology $H'$, that is an isomorphism for one-point spaces. If $H$ and $H'$ have partially compact supports, then $h$ is an isomorphism for any polyhedron pair.
\end{thm}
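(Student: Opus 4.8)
The plan is to bootstrap from the Eilenberg--Steenrod uniqueness theorem on compact polyhedra up to arbitrary polyhedra by means of the partially compact support sequences, after which everything reduces to a diagram chase. First I would treat compact polyhedra: since $H$ and $H'$ satisfy the homotopy, excision and exactness axioms and $h$ is an isomorphism for one-point spaces, the classical uniqueness argument (induction on the number of simplices, using those axioms together with the five lemma) shows that $h\colon H^{m}(P,Q;G)\to {H'}^{m}(P,Q;G)$ is an isomorphism for every pair of compact polyhedra $(P,Q)$ and every $m\in\mathbb{Z}$. Now let $X$ be an arbitrary polyhedron and $\mathbf{F}=\{F_\alpha\}_{\alpha\in\mathcal{A}}$ the direct system of its compact subspaces; the compact subpolyhedra of $X$ form a cofinal subsystem, so for the computation of $\varprojlim$ and of its higher derived functors we may assume the $F_\alpha$ are compact subpolyhedra, and then $h$ induces an isomorphism of inverse systems $\{H^{j}F_\alpha\}\to\{{H'}^{j}F_\alpha\}$ for every $j$.

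Since $\varprojlim^{k}$ is a functor on the category of inverse systems of abelian groups over a fixed directed index set, it carries this isomorphism of systems to isomorphisms
\[
{\varprojlim}^{k} H^{j}F_\alpha \ \xrightarrow{\ \cong\ }\ {\varprojlim}^{k} {H'}^{j}F_\alpha
\]
for all $k\ge 0$ and all $j$. Next I would apply the partially compact support hypothesis to the fixed polyhedron $X$, once for $H$ and once for $H'$, obtaining the two finite exact sequences from the partially compact support property, and regard $h$ as a morphism between these long exact sequences. Every term other than $H^{n}X$ and ${H'}^{n}X$ has the form ${\varprojlim}^{k}H^{j}F_\alpha$ with $1\le j\le n$, on which $h$ has just been shown to be an isomorphism; the five lemma, applied to the short segment of the two sequences surrounding $H^{n}X$, then forces $h\colon H^{n}X\to {H'}^{n}X$ to be an isomorphism, for every $n$ and every polyhedron $X$.

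Finally, for a polyhedron pair $(X,A)$ both $X$ and $A$ are polyhedra, so I would feed the absolute case just proved into the exact cohomology sequences of the pair for $H$ and for $H'$; since $h$ is a homomorphism of cohomology theories it commutes with the induced homomorphisms and with $\delta$, so these sequences form a commutative ladder and the five lemma gives that $h\colon H^{n}(X,A;G)\to {H'}^{n}(X,A;G)$ is an isomorphism. The inverse of $h$ is then automatically natural, so $h$ is a natural isomorphism on all polyhedron pairs.

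The homological algebra above is routine; the step I expect to be the real obstacle is the \emph{naturality} needed in the middle paragraph, namely that the finite exact sequence supplied by the partially compact support property is natural with respect to the morphism $h$ of cohomology theories, so that the ladder between the sequence for $H$ and the sequence for $H'$ genuinely commutes. Concretely, this amounts to checking that the map $H^{n}X\to\varprojlim H^{n}F_\alpha$ is the canonical one induced by the inclusions $F_\alpha\hookrightarrow X$ and that the remaining connecting homomorphisms arise likewise from a construction through which $h$ propagates. Once this is secured the proof closes formally; for the singular cohomology theory it is immediate from the explicit construction of that sequence in \cite{6}.
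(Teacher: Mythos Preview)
The paper does not supply a proof of this theorem at all: it is quoted verbatim as Theorem~5 of \cite{6} and used only as a black box (see Corollary~3.5), so there is no argument here against which to compare your proposal.

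As a standalone proof sketch your approach is sound and is, in outline, the standard one. Two remarks. First, once you restrict to polyhedra you can invoke Corollary~1.5 (also cited from \cite{6}) rather than the full sequence~(1.3): on a polyhedron the partially compact support sequence collapses to the short exact sequence
\[
0 \longrightarrow {\varprojlim}^{1} H^{n-1}F_\alpha \longrightarrow H^{n}X \longrightarrow \varprojlim H^{n}F_\alpha \longrightarrow 0,
\]
so the five-lemma step becomes a bare short-five-lemma and no segment-hunting in a long sequence is needed. Second, the naturality worry you flag is exactly the right place to be cautious; in \cite{6} the sequence is obtained from an explicit spectral-sequence/derived-limit construction applied to the inclusions $F_\alpha\hookrightarrow X$, and a transformation $h$ of cohomology theories feeds through that construction functorially, which is what makes the ladder commute. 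Absent access to \cite{6}, you would need to assume this naturality as part of the definition of ``partially compact supports''; with it, your argument closes.
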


In the paper we will propose  the universal coefficients formula as one more axiomatic characterization of the singular cohomology theory (cf. Axiom D in \cite{1}). 

UCF ({\it Universal coefficients formula}): For each space $X$ there exists a functorial  exact sequence:

\begin{equation}
0 \to Ext(H^s_{n-1}(X),G) \to H^n (X;G) \to Hom(H^s_n(X),G) \to 0,
\end{equation}
where $H_*^s(X)$ is the singular homology groups with coefficients in $\mathbb{Z}$.

Besides, we consider a cohomological  exact bifunctor which has a compact support property for an injective coefficients group and  prove the uniqueness  theorem for it (cf. Theorem 1 in \cite{9}).

EFSA ({\it Exact functor of the second argument}): A cohomological sequence $H^*=\{ H^n, \delta \}$ is called exact functor  of the second argument if for each short exact sequence of abelian groups
\begin{equation}
0 \to G \to G' \to G'' \to 0,
\end{equation} 
and for each space $X \in   \mathcal{K}^2$ there is a functorial natural long exact sequence:
\begin{equation}
\dots \to {H}^{n-1}(X;G'') \to {H}^n(X;G) \to {H}^n(X;G') \to {H}^n(X;G'') \to \dots ~.
\end{equation}

CSI ({\it Compact support for an injective coefficients group})  For each injective coefficients group $G$ there is an isomorphism
\begin{equation}
H^n(X;G) \approx \varprojlim H^n(F_\beta ;G),
\end{equation}
where $\mathbf{F}=\{ F_\alpha \}_{\alpha \in \mathcal{A}}$ is a direct system of compact subspaces of $X$.

Moreover, we study the connections between these different axiomatic approaches.

\section{A Nontrivial internal cohomological extension}

Let  $H^*=\{H^n, \delta \}_{n \in \mathbb{Z} } $ be a cohomological sequence defined on the category $\mathcal{K}^2$ of pairs of some topological spaces, which contains the category $\mathcal{K}^2_{\mathcal{P}ol_C}$ of pairs of compact polyhedrons and let $h^*$ be a cohomology theory in the Eilenberg-Steenrod sense on the subcategory $\mathcal{K}^2_{\mathcal{P}ol_C}$.   The sequence $H^*$ is called  an extension of the cohomology theory  $h^*$ in the Eilenberg-Steenrod sense defined on the category $\mathcal{K}^2_{\mathcal{P}ol_C}$  to the category $\mathcal{K}^2$ if   $H^*_{|\mathcal{P}ol_C}=h^* $ \cite{7}. The cohomological sequence  $H^* $  is called a nontrivial internal extension of the cohomology theory  $h^*$ if the following conditions are fulfilled:

$1_{NT}$)  $H^*$ is an extension of the cohomology theory  $h^*$ in the Eilenberg-Steenrod sense defined on the category $\mathcal{K}^2_{\mathcal{P}ol_C}$ to the category $\mathcal{K}^2$;

$2_{NT}$) the exact sequence 
\begin{equation}
0 \to {\varprojlim}^1 H^{n-1}(F_\alpha,E_\alpha;G) \to H^n(X,A;G) \to {\varprojlim} H^{n}(F_\alpha, E_\alpha;G) \to 0
\end{equation}
holds, where $\mathbf{F}= \{ (F_\alpha,E_\alpha) \}$ is a direct system of pairs of compact subspaces of $X$, such that $E_\alpha =F_\alpha \cap A$;

$3_{NT}$) the commutative diagram  
\begin{equation}
\begin{tikzpicture}

\node (A) {${\varprojlim}^1 {H}^{n-1}(L_\beta,M_\beta;G)$};
\node (B) [node distance=4cm, right of=A] {$H^{n}(Y,B;G)$};

\node (A1) [node distance=2cm, below of=A] {${\varprojlim}^1 {H}^{n-1}(F_\alpha,E_\alpha;G)$};
\node (B1) [node distance=2cm, below of=B] {$H^{n}(X,A;G)$};

\draw[->] (A) to node [above]{}(B);

\draw[->] (A1) to node [above]{}(B1);

\draw[->] (A) to node [right]{${\varprojlim}^1  \tilde{f}^*$}(A1);

\draw[->] (B) to node [right]{$f^*$}(B1);

\end{tikzpicture}
\end{equation}holds for any continuous mapping $f:(X,A) \to (Y,B)$ from $\mathcal{K}^2$, where $\tilde{f}^* : \{ H^{n}(L_\beta,M_\beta;G),(j_{\beta,\beta'})^*, \mathcal{B}  \} \to \{ H^{n}(F_\alpha,E_\alpha), (i_{\alpha,\alpha'})^*, \mathcal{A}  \}$ is mapping of the inverse systems induced by $f$;

$4_{NT}$) $H^*$ satisfies the exactness axiom.
\begin{thm} If $H^*$ is a nontrivial internal extension of cohomology theory  $h^*$ in the Eilenberg-Steenrod sense defined on the category $\mathcal{K}^2_{\mathcal{P}ol_C}$ to the category $\mathcal{K}^2_{\mathcal{CW}}$, then it is a theory in the Eilenberg-Steenrod sense on the category $\mathcal{K}^2_{\mathcal{CW}}$.  
\end{thm}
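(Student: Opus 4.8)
The plan is to verify that a nontrivial internal extension $H^*$ of $h^*$ from $\mathcal{K}^2_{\mathcal{P}ol_C}$ to $\mathcal{K}^2_{\mathcal{CW}}$ satisfies each of the Eilenberg--Steenrod axioms on $\mathcal{K}^2_{\mathcal{CW}}$: homotopy, excision, exactness, and dimension. The exactness axiom is immediate from hypothesis $4_{NT}$, and the dimension axiom follows from hypothesis $1_{NT}$, since a one-point space is a compact polyhedron, on which $H^*$ agrees with $h^*$. So the content of the proof concerns the homotopy and excision axioms, and these must be extracted from the Milnor-type $\varprojlim$--$\varprojlim^1$ description in $2_{NT}$ together with the naturality diagram $3_{NT}$.

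First I would establish the homotopy axiom. Given a homotopy $F:(X,A)\times[0,1]\to(Y,B)$ with endpoints $f_0,f_1$, I want $f_0^*=f_1^*$ on $H^n(Y,B;G)$. Pick any object $(Y,B)$ and the direct system $\mathbf{L}=\{(L_\beta,M_\beta)\}$ of its compact polyhedral (or compact) pairs; for a compact pair $(F_\alpha,E_\alpha)$ in $X$ the images $f_0(F_\alpha),f_1(F_\alpha)$ and the whole track $F(F_\alpha\times[0,1])$ are compact in $Y$, hence contained in some $(L_\beta,M_\beta)$. On that compact polyhedral level $h^*=H^*$ already has the homotopy property (it is a genuine Eilenberg--Steenrod theory there by $1_{NT}$), so the two induced maps of inverse systems $\tilde f_0^*,\tilde f_1^*$ are homotopic, i.e. equal up to a morphism of inverse systems with the same limit behaviour, forcing $\varprojlim\tilde f_0^*=\varprojlim\tilde f_1^*$ and $\varprojlim^1\tilde f_0^*=\varprojlim^1\tilde f_1^*$. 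Feeding this into the natural commutative square $3_{NT}$ and the five-term exact sequence of $2_{NT}$, a diagram chase (really the five lemma applied to the map of short exact sequences induced by $f_0^*-f_1^*$, which is zero on both outer terms) gives $f_0^*=f_1^*$ on $H^n(X,A;G)$.

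Next I would treat excision. Let $(X,A)$ be a $CW$-pair and $U\subseteq A$ open with $\overline U\subseteq\operatorname{int}A$ (or whatever form the excision axiom takes for $\mathcal{K}^2_{\mathcal{CW}}$), and consider the inclusion $e:(X\setminus U,A\setminus U)\hookrightarrow(X,A)$. The key observation is that the direct system of compact pairs of $(X\setminus U,A\setminus U)$ is cofinal in an appropriately truncated subsystem of the compact pairs of $(X,A)$: every compact pair $(F_\alpha,E_\alpha)$ in $X$ can be pushed, using an excision-type deformation available at the compact-polyhedron level, into a compact pair missing $U$, and on the polyhedral pieces the excision isomorphism holds because $h^*$ is Eilenberg--Steenrod there. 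Hence $e$ induces an isomorphism of the two inverse systems, therefore isomorphisms on both $\varprojlim$ and $\varprojlim^1$; applying the five lemma to the map of short exact sequences $2_{NT}$ induced by $e^*$ shows $e^*$ is an isomorphism. I would assume the exactness axiom directly from $4_{NT}$ and remark that naturality of all the maps involved is guaranteed by $2_{NH}$--type commutativity already built into the definition of a cohomological sequence together with $3_{NT}$.

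The main obstacle I expect is the cofinality/approximation argument in the excision step: showing that arbitrary compact subspaces of a $CW$ complex can be replaced, up to the inverse-limit behaviour, by compact \emph{polyhedral} pairs on which $h^*$ is known to satisfy excision, and that this replacement is compatible with the inclusion $e$. One must be careful that $\varprojlim$ and $\varprojlim^1$ are genuinely preserved --- a map of inverse systems that is only cofinal (not levelwise iso) still induces isomorphisms on $\varprojlim^j$, so the right statement is cofinality of index categories, and I would phrase it that way. A secondary subtlety is the homotopy step: one needs that a homotopy of maps of compact pairs induces the \emph{same} map of pro-groups, not merely homotopic representatives; since the homotopy property of $h^*$ on $\mathcal{K}^2_{\mathcal{P}ol_C}$ gives literal equality $\tilde f_0^*=\tilde f_1^*$ after passing to cohomology, this is fine, but it is the point where the hypothesis that $h^*$ is Eilenberg--Steenrod on compact polyhedra is really used. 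Everything else is the five lemma applied to $2_{NT}$ and routine naturality bookkeeping via $3_{NT}$.
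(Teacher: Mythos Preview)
Your treatment of exactness, dimension, and excision parallels the paper's and is sound: for excision the five lemma is invoked correctly (outer vertical isomorphisms force the middle one), and the cofinality bookkeeping is the same as in the paper.

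The homotopy argument, however, contains a genuine gap. You assert that ``the five lemma applied to the map of short exact sequences induced by $f_0^*-f_1^*$, which is zero on both outer terms'' yields $f_0^*=f_1^*$. This is not the five lemma, and the inference fails in general. If a morphism between the middle terms of two short exact sequences restricts to zero on the subobject and projects to zero on the quotient, it still factors through a homomorphism from the quotient of the source to the subobject of the target, and nothing forces that to vanish unless the sequences split naturally. In your situation the obstruction is an a priori arbitrary map
\[
\varprojlim H^{n}(L_\beta,M_\beta;G)\ \longrightarrow\ {\varprojlim}^{1} H^{n-1}(F_\alpha,E_\alpha;G),
\]
and equality of the pro-maps $\tilde f_0^*=\tilde f_1^*$ does not kill it.

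The paper sidesteps this by first reducing, via $4_{NT}$, to the absolute case and then to the particular inclusions $i_0,i_1\colon X\to X\times I$, where the projection $p\colon X\times I\to X$ is available. One observes that $\{F_\alpha\times I\}$ is cofinal among compact subsets of $X\times I$; at the compact level $i_k^\alpha\circ p_\alpha\simeq 1_{F_\alpha\times I}$, so $1_{NT}$ gives $p_\alpha^*\circ(i_k^\alpha)^*=\mathrm{id}$ on cohomology. Passing to $\varprojlim$ and ${\varprojlim}^{1}$ and now applying the five lemma legitimately (outer verticals are isomorphisms), one concludes that $p^*\circ i_k^*$ is an automorphism of $H^n(X\times I;G)$. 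Since $p\circ i_0=p\circ i_1$ yields $i_0^*\circ p^*=i_1^*\circ p^*$, one can cancel:
\[
i_0^*\circ(p^*\circ i_0^*)=(i_0^*\circ p^*)\circ i_0^*=(i_1^*\circ p^*)\circ i_0^*=i_1^*\circ(p^*\circ i_0^*),
\]
and invertibility of $p^*\circ i_0^*$ gives $i_0^*=i_1^*$. The retraction $p$ supplies exactly the cancellability your direct argument with arbitrary $f_0,f_1$ lacks.
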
 
 Note that in the next section a nontrivial internal extension of cohomology theory will be called a cohomology of Mdzinarishvili sense as well. 

\begin{proof} Homotopy Axiom. By the property $4_{NT}),$ it is sufficient to show for the absolute case. For this aim, let us show that for each $X$ from the category $\mathcal{K}^2_\mathcal{CW}$ the inclusions $i_0, i_1 : X \rightarrow X \times I$ induce the same homomorphism $i_0^*=i_1^*:H^n(X \times I;G) \rightarrow H^n(X;G)$, where $i_0(x)=(x,0), ~~ i_1(x)=(x,1)$. Let $\mathbf{F}=\{ F_\alpha \}_{\alpha \in \mathcal{A}}$ be a direct system of compact subspaces $F_\alpha \subset X$. In this case the system $\mathbf{F }\times I=\{ F_\alpha \times I\}_{\alpha \in \mathcal{A}}$ is a confinal subsystem of the system $\mathbf{E }=\{ E_\beta \}_{\beta \in \mathcal{B}}$ of all compact subspaces $E_\beta \subset X \times I$. Indeed, for each $E_\beta$ consider its projection $F_\alpha = p (E_\beta)$, where $p:X \times I \rightarrow X$ is defined by the formula $p(x,t)=x$. In this case $F_\alpha $ is compact and  $E_\beta \subset F_\alpha \times I$. Let $\tilde{i_0}=\{i_0^\alpha\}, \tilde{i_1}=\{i_1^\alpha\}: \{ F_\alpha \}_{\alpha \in \mathcal{A}} \rightarrow \{ F_\alpha \times I\}_{\alpha \in \mathcal{A}}$ are the canonical mappings induced by $i_0$ and $i_1$. Consider the corresponding mappings $\tilde{i_0}^*=\{(i_0^\alpha)^*\}, \tilde{i_1}^*=\{(i_1^\alpha)^*\}: \{ H^n(F_\alpha \times I;G) \}_{\alpha \in \mathcal{A}} \rightarrow \{ H^n(F_\alpha ;G)\}_{\alpha \in \mathcal{A}}$ and the induced commutative diagram:
\begin{equation}
\begin{tikzpicture}
\node (A) {0};
\node (B) [node distance=2.5cm, right of=A] {${\varprojlim}^1 {H}^{n-1}(F_{\alpha} \times I;G)$};
\node (C) [node distance=4cm, right of=B] {$H^n(X \times I;G)$};
\node (D) [node distance=4cm, right of=C] {${\varprojlim} {H}^{n}(F_{\alpha} \times I;G)$};
\node (E) [node distance=2.5cm, right of=D] {0};

\node (A1) [node distance=2cm, below of=A] {0};
\node (B1) [node distance=2cm, below of=B] {${\varprojlim}^1 {H}^{n-1}(F_{\alpha};G)$};
\node (C1) [node distance=2cm, below of=C] {$H^n(X;G)$};
\node (D1) [node distance=2cm, below of=D] {${\varprojlim} {H}^{n}(F_{\alpha};G)$};
\node (E1) [node distance=2cm, below of=E] {0,};

\draw[->] (A) to node [above]{}(B);
\draw[->] (B) to node [above]{}(C);
\draw[->] (C) to node [above]{}(D);
\draw[->] (D) to node [above]{}(E);

\draw[->] (A1) to node [above]{}(B1);
\draw[->] (B1) to node [above]{}(C1);
\draw[->] (C1) to node [above]{}(D1);
\draw[->] (D1) to node [above]{}(E1);

\draw[->] (B) to node [right]{${\varprojlim}^1 ( {i_k^{\alpha}})^*$}(B1);
\draw[->] (C) to node [right]{$i_k^*$}(C1);
\draw[->] (D) to node [right]{${\varprojlim} ( {i_k^\alpha})^*$}(D1);
\end{tikzpicture}
\end{equation}
where $k=0,1$. On the other hand, for each $F_\alpha$ the inclusion maps $i_0^\alpha, i_1^\alpha :F_\alpha \rightarrow F_\alpha \times I$ belongs to the category $\mathcal{K}^2_{{\mathcal{P}ol_C}}$ and are homotopic and so by virtue of condition $1_{NT})$, $(i_0^\alpha)^*=(i_1^\alpha)^* :H^n(F_\alpha \times I;G) \rightarrow H^n (F_\alpha ;G)$. Therefore, ${\varprojlim} ( {i_0^\alpha})^*={\varprojlim} ( {i_1^\alpha})^*$ and ${\varprojlim}^1 ( {i_0^{\alpha}})^*={\varprojlim}^1 ( {i_1^{\alpha}})^*$. 
On the other hand, if we consider the system $\tilde{p}=\{p_\alpha\}:\{F_\alpha \times I \} \rightarrow \{F_\alpha \} $ induced by the projection $p:X \times I \rightarrow X$, then it induces the following commutative diagram:
\begin{equation}
\begin{tikzpicture}
\node (A) {0};
\node (B) [node distance=2.5cm, right of=A] {${\varprojlim}^1 {H}^{n-1}(F_{\alpha};G)$};
\node (C) [node distance=4cm, right of=B] {$H^n(X ;G)$};
\node (D) [node distance=4cm, right of=C] {${\varprojlim} {H}^{n}(F_{\alpha} ;G)$};
\node (E) [node distance=2.5cm, right of=D] {0};

\node (A1) [node distance=2cm, below of=A] {0};
\node (B1) [node distance=2cm, below of=B] {${\varprojlim}^1 {H}^{n-1}(F_{\alpha}\times I;G)$};
\node (C1) [node distance=2cm, below of=C] {$H^n(X\times I;G)$};
\node (D1) [node distance=2cm, below of=D] {${\varprojlim} {H}^{n}(F_{\alpha}\times I;G)$};
\node (E1) [node distance=2cm, below of=E] {0.};

\draw[->] (A) to node [above]{}(B);
\draw[->] (B) to node [above]{}(C);
\draw[->] (C) to node [above]{}(D);
\draw[->] (D) to node [above]{}(E);

\draw[->] (A1) to node [above]{}(B1);
\draw[->] (B1) to node [above]{}(C1);
\draw[->] (C1) to node [above]{}(D1);
\draw[->] (D1) to node [above]{}(E1);

\draw[->] (B) to node [right]{${\varprojlim}^1  {p_{\alpha}}^*$}(B1);
\draw[->] (C) to node [right]{$p^*$}(C1);
\draw[->] (D) to node [right]{${\varprojlim}  {p_\alpha}^*$}(D1);
\end{tikzpicture}
\end{equation}
Note that, for each $\alpha \in \mathcal{A}$, $i_k  \circ p_\alpha \sim 1_{F_\alpha \times I}$ in the category $\mathcal{K}^2_{{\mathcal{P}ol_C}}$ and so $p_\alpha^* \circ i_k^*=(1_{F_\alpha \times I})^*=1_{H^n(F_\alpha \times I; G)}, ~k=0,1.$ Therefore, we obtain:
\begin{equation}
\varprojlim p_\alpha^* \circ \varprojlim (i_k)^* = \varprojlim (1_{F_\alpha \times I})^*,
\end{equation}
\begin{equation}
{\varprojlim}^1 p_\alpha^* \circ {\varprojlim}^1 (i_k)^* = {\varprojlim}^1 (1_{F_\alpha \times I})^*.
\end{equation}
Consequently, by the diagrams (2.3) and (2.4) we obtain that $p^* \circ i_k^* ,~k=0,1$ are isomorphisms. On the other hand, $p \circ i_0 =p \circ i_1$ and so $i_0^* \circ p^* =i_1^* \circ p^*$. Therefore
\begin{equation}
i_0^*  \circ (p^* \circ i_0^*) = (i_0^*  \circ p^*) \circ i_0^*=(i_1^*  \circ p^*) \circ i_0^*=i_1^*  \circ (p^* \circ i_0^*).
\end{equation}
Consequently, we obtain that $i_0^*=i_1^*.$

Excision Axiom. Let $(X,A) \in \mathcal{K}^2_{\mathcal{CW}}$, then $A$ is closed subspace of $X$. Let $U$ be an open subspace of $A$ such that $\bar{U} \subset Int A$. Consider the coresponding inclusion map $i_U : (X\setminus U, A\setminus U) \rightarrow (X,A)$ and  let us show that it induces the isomorphism:
$$ i_U^*: H^n (X,A;G) \cong H^n (X\setminus U, A  \setminus U;G).$$
Indeed, let $\mathbf{F}=\{ (F_\alpha,E_\alpha) \}_{\alpha \in \mathcal{A}}$ is the system of all compact pairs of subspaces of $X$, such that $E_\alpha =F_\alpha \cap A$. Let $\mathbf{L}=\{ (L_\beta,M_\beta) \}_{\beta \in \mathcal{B}}$ be the system of all compact pairs of subspaces of $X \setminus U$, such that $M_\beta = L_\beta \cap (A \setminus U)$. Let us show that $(i_U)^{-1}(\mathbf{F})=\{ ((i_U)^{-1}(F_\alpha),(i_U)^{-1}(E_\alpha)) \}_{\alpha \in \mathcal{A}}=\{ (F_\alpha \setminus U, E_\alpha \setminus U) \}_{\alpha \in \mathcal{A}}$ is a confinal subsystem of the system $\mathbf{L}=\{ (L_\beta,M_\beta) \}_{\beta \in \mathcal{B}}$. Indeed, let $(L_\beta,M_\beta)$ be any pair of the system $\mathbf{L}$. In this case, $L_\beta \subset X \setminus U \subset X$ and  $M_\beta = L_\beta
 \cap A$. Therefore, $(L_\beta,M_\beta)$ is a pair of the system $\mathbf{F}$. On the other hand, $(L_\beta,M_\beta) = (i_U)^{-1} (L_\beta,M_\beta)$. Consider the canonical mapping $\tilde{i_U}=\{i_U^\alpha \}: \{ (F_\alpha \setminus U, E_\alpha \setminus U) \} \rightarrow \{ (F_\alpha, E_\alpha ) \}$, which induces the mapping 
$$\tilde{i_U}^*=\{(i_U^\alpha)^* \}: \{ H^n(F_\alpha , E_\alpha  ;G) \} \rightarrow \{ H^n(F_\alpha \setminus U, E_\alpha \setminus U ;G) \}.$$
Consider the corresponding commutative diagram:
\begin{equation}
\begin{tikzpicture}
\node (A) {0};
\node (B) [node distance=3.5cm, right of=A] {${\varprojlim}^1 {H}^{n-1}(F_{\alpha}, E_\alpha ;G)$};
\node (C) [node distance=5.2cm, right of=B] {$H^n(X,A;G)$};
\node (D) [node distance=3cm, right of=C] {};

\node (A1) [node distance=2cm, below of=A] {0};
\node (B1) [node distance=2cm, below of=B] {${\varprojlim}^1 {H}^{n-1}(F_{\alpha} \setminus U, E_\alpha \setminus U;G)$};
\node (C1) [node distance=2cm, below of=C] {$H^n(X \setminus U, A \setminus U;G)$};
\node (D1) [node distance=2cm, below of=D] {};

\node (D2) [node distance=2cm, below of=C1] {${\varprojlim} {H}^{n}(F_{\alpha}, E_\alpha ;G)$};
\node (C2) [node distance=3cm, left of=D2] {};
\node (E2) [node distance=3.5cm, right of=D2] {0};
\draw[->] (D2) to node [above]{}(E2);
\draw[->] (C2) to node [above]{}(D2);

\node (C3) [node distance=2cm, below of=C2] {};
\node (D3) [node distance=2cm, below of=D2] {${\varprojlim} {H}^{n}(F_{\alpha} \setminus U, E_\alpha  \setminus U;G)$};
\node (E3) [node distance=3.5cm, right of=D3] {0.};
\draw[->] (D3) to node [above]{}(E3);
\draw[->] (C3) to node [above]{}(D3);

\draw[->] (A) to node [above]{}(B);
\draw[->] (B) to node [above]{}(C);
\draw[->] (C) to node [above]{}(D);

\draw[->] (A1) to node [above]{}(B1);
\draw[->] (B1) to node [above]{}(C1);
\draw[->] (C1) to node [above]{}(D1);

\draw[->] (B) to node [right]{${\varprojlim}^1 ( {i_U^{\alpha}})^*$}(B1);
\draw[->] (C) to node [right]{$i_U^*$}(C1);
\draw[->] (D2) to node [right]{${\varprojlim} ( {i_U^\alpha})^*$}(D3);
\end{tikzpicture}
\end{equation}
For each $\alpha \in \mathcal{A}$ consider $U_\alpha = F_\alpha \cap U$. In this case, $U_\alpha$ is an open subspace of $X \setminus U$ and $\bar{U}_\alpha \subset int E_\alpha$.  On the other hand,  $(F_\alpha \setminus U, E_\alpha \setminus U)=(F_\alpha \setminus U_\alpha, E_\alpha \setminus U_\alpha)$  and so $i_U^\alpha : (F_\alpha \setminus U_\alpha, E_\alpha \setminus U_\alpha) \rightarrow (F_\alpha , E_\alpha )$ is an inclusion map which belongs to the category $\mathcal{K}^2_{{\mathcal{P}ol_C}}$. Therefore, $i_U^\alpha$ induces the isomorphism  $(i_U^\alpha)^* : H^n(F_\alpha, E_\alpha;G) \cong H^n(F_\alpha \setminus U , E_\alpha \setminus U;G)$. Consequently, the homomorphisms  ${\varprojlim} ( {i_U^\alpha})^*$, ${\varprojlim}^1 ( {i_U^{\alpha}})^*$ and so $i_U^*$ are isomorphisms.

Dimension Axiom. For each one-point space $X=\{*\}$ the direct system $\mathbf{F}=\{F_\alpha\}_{\alpha \in \mathcal{A}}$ of compact subspaces is a constant direct system, where $F_\alpha =\{*\}$. Consequently, we obtain the following short exact sequence:
\begin{equation}
0 \to {\varprojlim}^1 H^{n-1}(*;G) \to H^n(*;G) \to {\varprojlim} H^{n}(*;G) \to 0.
\end{equation}
On the other hand, ${\varprojlim}^1 H^{n-1}(*;G)=0$ for all $n \in \mathbb{Z}$ and $H^{n}(*;G)={\varprojlim} H^{n}(*;G)=0$ for $n \neq 0$.

\end{proof}

\section{Relations between different axiomatic systems   for the singular cohomology theory}

In this paper we will say that a cohomological sequence $H^*$ defined on the category $\mathcal{K}^2_{\mathcal{CW}}$  of pairs of topological spaces having a homotopy type of $CW$ complexes is:

1)  a cohomology theory in the Milnor sense if it is  a cohomology theory in the Eilenberg-Steenrod sense and it satisfies an additivity axiom;

2) a cohomology theory in the Mdzinarishvili sense if it is a nontrivial internal extension of the cohomology theory in the Eilenberg-Steenrod sense defined on the category $\mathcal{K}^2_{\mathcal{P}ol_C}$ to the category $\mathcal{K}^2_{\mathcal{CW}}$;

3)  a cohomology theory in the Berikashvili-Mdzinarishvili-Beridze sense if it is an extension of the cohomology theory in the Eilenberg-Steenrod sense defined on the category $\mathcal{K}^2_{\mathcal{P}ol_C}$ to the category $\mathcal{K}^2_{\mathcal{CW}}$ and it satisfies the exactness and UCF axioms on the category $\mathcal{K}^2_{\mathcal{CW}}$ (cf. axiom D of \cite{1});

4) a cohomology theory in the Inasaridze-Mdzinarishvili-Beridze sense if it is an extension of the cohomology theory in the Eilenberg-Steenrod sense defined on the category $\mathcal{K}^2_{\mathcal{P}ol_C}$ to the category $\mathcal{K}^2_{\mathcal{CW}}$ and  it satisfies the EFSA and CSI axioms on the category  $\mathcal{K}^2_{\mathcal{CW}}$ (cf. \cite{9}).

\begin{thm} If $H^*$ is a cohomology  theory in the Mdzinarishvili sense defined on the category $\mathcal{K}^2_{\mathcal{CW}}$ of pairs of topological spaces having a homotopy type of $CW$ complexes,  then it is a cohomology theory in the Milnor sense.
\end{thm}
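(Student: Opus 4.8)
The plan is to deduce the statement from the theorem of Section~2 together with one additional verification. By that theorem, a cohomology theory $H^*$ in the Mdzinarishvili sense on $\mathcal{K}^2_{\mathcal{CW}}$ is already a cohomology theory in the Eilenberg--Steenrod sense on $\mathcal{K}^2_{\mathcal{CW}}$, so to conclude that it is a theory in the Milnor sense it remains only to check the additivity axiom AD. Accordingly, fix a topological sum $X=\bigsqcup_{\alpha\in\mathcal{A}}X_\alpha$ of (necessarily clopen) subspaces $X_\alpha\in\mathcal{K}^2_{\mathcal{CW}}$ with inclusions $i_\alpha\colon X_\alpha\to X$; the goal is to show that the homomorphisms $i_\alpha^*$ assemble to an isomorphism $H^n(X;G)\xrightarrow{\ \cong\ }\prod_\alpha H^n(X_\alpha;G)$ for every $n$ and every coefficient group $G$.

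The engine of the argument is property $2_{NT}$, applied in the absolute case $A=\emptyset$ to $X$ and to each summand $X_\alpha$. The first step is an elementary remark about compact subsets of a topological sum: every compact $F\subseteq X$ meets only finitely many of the $X_\alpha$ (these form a disjoint open cover of $F$), whence $F=\bigsqcup_{\alpha\in S(F)}F_\alpha$ with $S(F)$ finite and each $F_\alpha=F\cap X_\alpha$ compact, and conversely every such finite disjoint union is a compact subspace of $X$. Thus the directed set $\Phi$ of compact subspaces of $X$ is order-isomorphic to the finite-support product $D=\{(F_\alpha)_\alpha : F_\alpha\in\Phi_\alpha,\ F_\alpha=\emptyset\text{ for all but finitely many }\alpha\}$ of the directed sets $\Phi_\alpha$ of compact subspaces of the $X_\alpha$, and one may run $2_{NT}$ over $D$ instead of $\Phi$. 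For $F=(F_\alpha)\in D$, finite additivity of the Eilenberg--Steenrod theory $H^*$ (a formal consequence of the exactness and excision axioms, using that the $X_\alpha$, hence the $F_\alpha$, are clopen) gives natural isomorphisms $H^m(F;G)\cong\bigoplus_\alpha H^m(F_\alpha;G)$ compatible with the restriction maps of the inverse systems; and since $H^m(\emptyset;G)=0$, this finite direct sum is at the same time the full product $\prod_\alpha H^m(F_\alpha;G)$.

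Next I would compute the derived limits over $D$. For each fixed $\alpha$ the ``$\alpha$-th coordinate'' is an inverse system $F\mapsto H^m(F_\alpha;G)$ over $D$ which is pulled back from $\Phi_\alpha$ along the projection $\pi_\alpha\colon D\to\Phi_\alpha$; since $\pi_\alpha$ is cofinal and its comma categories are directed (any two elements of $D$ lying over a given compact subset of $X_\alpha$ admit a common upper bound of the same form, namely their coordinatewise union), pullback along $\pi_\alpha$ preserves $\varprojlim$ and ${\varprojlim}^{1}$. Since, moreover, $\varprojlim$ and ${\varprojlim}^{1}$ commute with arbitrary products of inverse systems, one obtains for $m=n-1,n$ natural isomorphisms
$${\varprojlim}_{D}\,H^m(F;G)\ \cong\ \prod_\alpha{\varprojlim}_{\Phi_\alpha}H^m(F_\alpha;G),\qquad {\varprojlim}^{1}_{D}\,H^m(F;G)\ \cong\ \prod_\alpha{\varprojlim}^{1}_{\Phi_\alpha}H^m(F_\alpha;G),$$
the first of which can also be checked by hand directly from the definition of the inverse limit. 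These isomorphisms feed into a commutative ladder of short exact sequences whose top row is the $2_{NT}$-sequence of $X$, whose bottom row is the product over $\alpha$ of the $2_{NT}$-sequences of the $X_\alpha$ (still exact, a product of exact sequences being exact), whose middle vertical arrow is $(i_\alpha^*)_\alpha$, and whose two outer vertical arrows are the isomorphisms just exhibited. Commutativity of the right-hand square is functoriality of $\varprojlim$ with respect to the system maps induced by the $i_\alpha$; commutativity of the left-hand square is exactly property $3_{NT}$ applied to each $i_\alpha$ --- under the identification $\Phi\cong D$ the induced map of inverse systems $\widetilde{i_\alpha}^{\,*}$ becomes the projection onto the $\alpha$-th coordinate, so that $({\varprojlim}^{1}\widetilde{i_\alpha}^{\,*})_\alpha$ is precisely the left-hand outer isomorphism. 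The five lemma then forces $(i_\alpha^*)_\alpha$ to be an isomorphism; that is, AD holds, so $H^*$ is a cohomology theory in the Milnor sense.

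The step I expect to be the main obstacle is the homological-algebra input of the third paragraph, and in particular the claim that pullback along the coordinate projection $\pi_\alpha\colon D\to\Phi_\alpha$ leaves ${\varprojlim}^{1}$ (and all higher derived limits) unchanged. For the limit itself this is immediate, but $\pi_\alpha(\Phi_\alpha)$ is \emph{not} a cofinal subset of $D$ once $|\mathcal{A}|\ge 2$, so for ${\varprojlim}^{1}$ one must invoke the general fact that a cofinal map of directed sets with directed comma categories induces isomorphisms on all derived limits, rather than a plain ``cofinal subsystem'' argument. A secondary, bookkeeping-type point that must not be skipped is verifying that the map of inverse systems $\widetilde{i_\alpha}^{\,*}$ induced by $i_\alpha$ really does correspond, under $\Phi\cong D$ and the finite-additivity splitting, to the coordinate projection $\bigoplus_\gamma H^m(F_\gamma;G)\to H^m(F_\alpha;G)$, since it is this matching that allows $3_{NT}$ to deliver the commuting left-hand square.
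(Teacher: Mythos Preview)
Your argument is correct and takes a genuinely different path from the paper for the crucial ${\varprojlim}^{1}$ step. Both proofs open the same way---Theorem~2.1 reduces the problem to additivity, compacta in a disjoint union decompose as finite disjoint unions, one sets up the ladder of $2_{NT}$-sequences, and the $\varprojlim$-isomorphism follows from finite additivity together with $\varprojlim\prod\cong\prod\varprojlim$---but for the ${\varprojlim}^{1}$-isomorphism the paper does not use the cofinality/product argument you propose. Instead it invokes the universal coefficients formula on each compact $F_\beta$, writes $H^{m}(F_\beta;G)$ as an extension of $\mathrm{Hom}(H_m(F_\beta),G)$ by $\mathrm{Ext}(H_{m-1}(F_\beta),G)$, appeals to the Huber--Meier results (Proposition~1.2 and Corollary~1.5 of~\cite{4}) on derived limits of $\mathrm{Hom}$- and $\mathrm{Ext}$-systems with finitely generated first argument, identifies the resulting limits with singular-homology expressions for $X$ and the $X_\alpha$ (using additivity of singular homology and $\mathrm{Hom}(\bigoplus,-)\cong\prod\mathrm{Hom}$, $\mathrm{Ext}(\bigoplus,-)\cong\prod\mathrm{Ext}$), and assembles the ${\varprojlim}^{1}$-isomorphism from these pieces. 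Your approach is shorter and more conceptual, and it applies to any inverse system pulled back along the coordinate projections, not only to cohomology of compact polyhedra; the paper's version avoids citing the general cofinality theorem for derived limits and stays entirely within the references already in play, at the cost of a longer diagram chase that also relies on the finite generation of $H_*(F_\beta)$. The bookkeeping point you flag---that under the identification $\Phi\cong D$ the map ${\varprojlim}^{1}\widetilde{i_\alpha}^{\,*}$ becomes the $\alpha$-th coordinate projection---does check out: the section $\sigma_\alpha\colon\Phi_\alpha\to D$ satisfies $\pi_\alpha\sigma_\alpha=\mathrm{id}$, and restriction along $\sigma_\alpha$ is inverse to the cofinality isomorphism for $\pi_\alpha$, so the two descriptions of the left vertical match.
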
 
\begin{proof}  By theorem 2.1, if $H^*$ is a cohomology theory in the Mdzinarishvili sense defined on the category 	$\mathcal{K}^2_{\mathcal{CW}}$, then it is a theory in the Eilenberg-Steenrod sense. Therefore, it is sufficient to  show that it is an additive theory. 

Let $X \in \mathcal{K}^2_{\mathcal{CW}}$ be a disjoint union of open subspaces $X_{\alpha}, ~ \alpha \in \mathcal{A}$ with inclusion maps $i_{\alpha}:X_{\alpha} \to X$, all belonging to the category $\mathcal{K}^2_{\mathcal{CW}}$. Let $\mathbf{F}=\{ F_\beta \}  _{ \beta \in \mathcal{B} }$ be a direct system of all compact subspaces of $X$. Let $ F_{\alpha , \beta }=X_\alpha \bigcap F_\beta $, then $\mathbf{F_\alpha}=\{ F_{\alpha ,\beta} \}  _{ \beta \in \mathcal{B} }$ is the subsystem of the system $\mathbf{F}$ and there is a natural inclusion $\mathbf {i_{\alpha  }}=\{ i_{\alpha , \beta} \} : \mathbf{ F_\alpha} \to \mathbf{F}$, which is induced by the inclusion $i_\alpha : X_\alpha \to X.$ Consequently, for each $\alpha \in \mathcal{A}$ we have the following commutative diagram:
\begin{equation}
\begin{tikzpicture}
\node (A) {0};
\node (B) [node distance=2.5cm, right of=A] {${\varprojlim}^1 {H}^{n-1}(F_{\beta};G)$};
\node (C) [node distance=3.3cm, right of=B] {$H^n(X;G)$};
\node (D) [node distance=3.3cm, right of=C] {${\varprojlim} {H}^{n}(F_{\beta};G)$};
\node (E) [node distance=2.5cm, right of=D] {0};

\node (A1) [node distance=2cm, below of=A] {0};
\node (B1) [node distance=2cm, below of=B] {${\varprojlim}^1 {H}^{n-1}(F_{\alpha , \beta};G)$};
\node (C1) [node distance=2cm, below of=C] {$H^n(X_\alpha;G)$};
\node (D1) [node distance=2cm, below of=D] {${\varprojlim} {H}^{n}(F_{\alpha , \beta};G)$};
\node (E1) [node distance=2cm, below of=E] {0.};

\draw[->] (A) to node [above]{}(B);
\draw[->] (B) to node [above]{}(C);
\draw[->] (C) to node [above]{}(D);
\draw[->] (D) to node [above]{}(E);

\draw[->] (A1) to node [above]{}(B1);
\draw[->] (B1) to node [above]{}(C1);
\draw[->] (C1) to node [above]{}(D1);
\draw[->] (D1) to node [above]{}(E1);

\draw[->] (B) to node [right]{${\varprojlim}^1 ( {i_{\alpha , \beta }})^*$}(B1);
\draw[->] (C) to node [right]{$(i_{\alpha })^*$}(C1);
\draw[->] (D) to node [right]{${\varprojlim} ( {i_{\alpha , \beta }})^*$}(D1);
\end{tikzpicture}
\end{equation}
Therefore, the following diagram is commutative as well:
\begin{equation}
\begin{tikzpicture}
\node (A) {0};
\node (B) [node distance=3cm, right of=A] {${\varprojlim}^1 {H}^{n-1}(F_{\beta};G)$};
\node (C) [node distance=4cm, right of=B] {$H^n(X;G)$};
\node (D) [node distance=3.5cm, right of=C] {${\varprojlim} {H}^{n}(F_{\beta};G)$};
\node (E) [node distance=2.5cm, right of=D] {0};

\node (A1) [node distance=2cm, below of=A] {0};
\node (B1) [node distance=2cm, below of=B] {$\prod {\varprojlim}^1 {H}^{n-1}(F_{\alpha ,\beta};G)$};
\node (C1) [node distance=2cm, below of=C] {$\prod H^n(X_\alpha;G)$};
\node (D1) [node distance=2cm, below of=D] {$\prod {\varprojlim} {H}^{n}(F_{\alpha ,\beta};G)$};
\node (E1) [node distance=2cm, below of=E] {0.};

\draw[->] (A) to node [above]{}(B);
\draw[->] (B) to node [above]{}(C);
\draw[->] (C) to node [above]{}(D);
\draw[->] (D) to node [above]{}(E);

\draw[->] (A1) to node [above]{}(B1);
\draw[->] (B1) to node [above]{}(C1);
\draw[->] (C1) to node [above]{}(D1);
\draw[->] (D1) to node [above]{}(E1);

\draw[->] (B) to node [right]{$\prod {\varprojlim}^1 ( {i_{\alpha , \beta }})^*$}(B1);
\draw[->] (C) to node [right]{$\prod (i_{\alpha })^*$}(C1);
\draw[->] (D) to node [right]{$\prod {\varprojlim} (  {i_{\alpha , \beta }})^*$}(D1);
\end{tikzpicture}
\end{equation}
Note that for each $\beta$ a compact subspace $F_\beta $  has a nonempty intersection with $X_\alpha $ only for finitely   many $\alpha \in \mathcal{A}$ and therefore, by theorem 13.2c in \cite{3} we have the following isomorphism:
\begin{equation}
\begin{tikzpicture}
\node (A) {$H^n (F_\beta;G)$};
\node (B) [node distance=4cm, right of=A] {$\prod H^n(F_{\alpha , \beta} ;G).$};
\draw[->] (A) to node [above]{ $\cong $ }(B);
\end{tikzpicture}
\end{equation}
On the other hand, $\varprojlim \prod H^n(F_{\alpha , \beta} ;G) \cong  \prod \varprojlim H^n(F_{\alpha , \beta} ;G)$ and so the homomorphism 
\begin{equation}
\begin{tikzpicture}
\node (A) {${\varprojlim} {H}^{n}(F_{\beta};G)$};
\node (B) [node distance=6cm, right of=A] {$\prod {\varprojlim} {H}^{n}(F_{\alpha ,\beta};G)$};
\draw[->] (A) to node [above]{$\prod {\varprojlim} ( {i_{\alpha , \beta }})^*$ }(B);
\end{tikzpicture}
\end{equation}
is an isomorphism.

To complete the proof, it is sufficient to show that the homomorphism 
\begin{equation}
\begin{tikzpicture}
\node (A) {${\varprojlim}^{1} {H}^{*}(F_{\beta};G)$};
\node (B) [node distance=6cm, right of=A] {$\prod {\varprojlim}^{1} {H}^{*}(F_{\alpha ,\beta};G)$};
\draw[->] (A) to node [above]{$\prod {\varprojlim}^{1} ( {i_{\alpha , \beta }})^*$ }(B);
\end{tikzpicture}
\end{equation}
is an isomophism. 

Note that for each space $X \in \mathcal{K}^2_{\mathcal{P}ol_C}$, there exists a universal coefficients formula:
\begin{equation}
\begin{tikzpicture}
\node (A) {0};
\node (B) [node distance=2.5cm, right of=A] {$Ext(H_{n-1}(X),G)$};
\node (C) [node distance=3cm, right of=B] {$H^{n}(X;G)$};
\node (D) [node distance=3cm, right of=C] {$Hom(H_{n}(X),G)$};
\node (E) [node distance=2.5cm, right of=D] {0.};
\draw[->] (A) to node [above]{}(B);
\draw[->] (B) to node [above]{}(C);
\draw[->] (C) to node [above]{}(D);
\draw[->] (D) to node [above]{}(E);
\end{tikzpicture}
\end{equation}
Therefore, each $i_{\alpha, \beta}:F_{\alpha, \beta} \to F_{ \beta}$ inclusion induces the following commutative diagram:
\begin{equation}
\begin{tikzpicture}
\node (A) {0};
\node (B) [node distance=2.5cm, right of=A] {$Ext(H_{n-1}(F_{ \beta}),G)$};
\node (C) [node distance=3.5cm, right of=B] {$H^{n}(F_{ \beta};G)$};
\node (D) [node distance=3.5cm, right of=C] {$Hom(H_{n}(F_{ \beta}),G)$};
\node (E) [node distance=2.5cm, right of=D] {0};
\draw[->] (A) to node [above]{}(B);
\draw[->] (B) to node [above]{}(C);
\draw[->] (C) to node [above]{}(D);
\draw[->] (D) to node [above]{}(E);

\node (A1) [node distance=2cm, below of=A] {0};
\node (B1) [node distance=2.5cm, right of=A1] {$Ext(H_{n-1}(F_{\alpha, \beta}),G)$};
\node (C1) [node distance=3.5cm, right of=B1] {$H^{n}(F_{\alpha, \beta};G)$};
\node (D1) [node distance=3.5cm, right of=C1] {$Hom(H_{n}(F_{\alpha, \beta}),G)$};
\node (E1) [node distance=2.5cm, right of=D1] {0.};
\draw[->] (A1) to node [above]{}(B1);
\draw[->] (B1) to node [above]{}(C1);
\draw[->] (C1) to node [above]{}(D1);
\draw[->] (D1) to node [above]{}(E1);

\draw[->] (B) to node [right]{$Ext(  {i_{\alpha , \beta }})^*$}(B1);
\draw[->] (C) to node [right]{$(  {i_{\alpha , \beta }})^{*}$}(C1);
\draw[->] (D) to node [right]{$Hom(  {i_{\alpha , \beta }})^*$}(D1);
\end{tikzpicture}
\end{equation}
The groups $H_{*}(F_{\alpha, \beta})$ and $H_{*}(F_{ \beta})$ are finitely generated and so  by Corollary 1.5 of \cite{4} the diagram (3.7) induces the following  commutative diagram:
\begin{equation}
\begin{tikzpicture}
\node (A) {0};
\node (B) [node distance=3cm, right of=A] {$\varprojlim Ext(H_{n-1}(F_{ \beta}),G)$};
\node (C) [node distance=4.5cm, right of=B] {$\varprojlim H^{n}(F_{ \beta};G)$};
\node (D) [node distance=4.5cm, right of=C] {$\varprojlim Hom(H_{n}(F_{ \beta}),G)$};
\node (E) [node distance=3cm, right of=D] {0};
\draw[->] (A) to node [above]{}(B);
\draw[->] (B) to node [above]{}(C);
\draw[->] (C) to node [above]{}(D);
\draw[->] (D) to node [above]{}(E);

\node (A1) [node distance=2cm, below of=A] {0};
\node (B1) [node distance=3cm, right of=A1] {$\prod \varprojlim Ext(H_{n-1}(F_{\alpha, \beta}),G)$};
\node (C1) [node distance=4.5cm, right of=B1] {$\prod \varprojlim H^{n}(F_{\alpha, \beta};G)$};
\node (D1) [node distance=4.5cm, right of=C1] {$\prod \varprojlim Hom(H_{n}(F_{\alpha, \beta}),G)$};
\node (E1) [node distance=3cm, right of=D1] {0.};
\draw[->] (A1) to node [above]{}(B1);
\draw[->] (B1) to node [above]{}(C1);
\draw[->] (C1) to node [above]{}(D1);
\draw[->] (D1) to node [above]{}(E1);

\draw[->] (B) to node [right]{$\prod \varprojlim Ext(  {i_{\alpha , \beta }})^*$}(B1);
\draw[->] (C) to node [right]{$\prod \varprojlim (  {i_{\alpha , \beta }})^{*}$}(C1);
\draw[->] (D) to node [right]{$\prod \varprojlim Hom(  {i_{\alpha , \beta }})^*$}(D1);
\end{tikzpicture}
\end{equation}
On the other hand, we have:
\begin{equation}
\begin{tikzpicture}
\node (A) {$\varprojlim Hom(H_{n}(F_{ \beta}),G)$};
\node (B) [node distance=5cm, right of=A] {$ Hom( \varinjlim H_{n}(F_{\beta}),G)$};
\node (C) [node distance=5cm, right of=B] {$ Hom ( H^s_{n}(X),G)$};
\node (D) [node distance=3cm, right of=C] {};

\node (A1) [node distance=2cm, below of=A] {$ Hom (\sum H^s_{n}(X_\alpha ),G)$};
\node (A2) [node distance=2.5cm, left of=A1] {};
\node (B1) [node distance=2cm, below of=B] {$ Hom (\sum \varinjlim H_{n}(F_{\alpha , \beta}),G)$};
\node (C1) [node distance=2cm, below of=C] {$\prod Hom ( \varinjlim  H_{n}(F_{\alpha , \beta}),G)$};
\node (D1) [node distance=2cm, below of=D] {};

\node (A11) [node distance=2cm, below of=B1] {$\prod \varprojlim Hom (   H_{n}(F_{\alpha , \beta}),G).$};
\node (A21) [node distance=3.5cm, left of=A11] {};

\draw[->] (A) to node [above]{$\approx $ }(B);
\draw[->] (B) to node [above]{$\approx $ }(C);
\draw[->] (C) to node [above]{$\approx $ }(D);

\draw[->] (A2) to node [above]{$\approx $ }(A1);
\draw[->] (A1) to node [above]{$\approx $ }(B1);
\draw[->] (B1) to node [above]{$\approx $ }(C1);
\draw[->] (C1) to node [above]{$\approx $ }(D1);
\draw[->] (A21) to node [above]{$\approx $ }(A11);
\end{tikzpicture}
\end{equation}
Therefore, the homomorphism 
\begin{equation}
\begin{tikzpicture}
\node (A) {${\varprojlim} Hom( {H}_{n}(F_{\beta}),G)$};
\node (B) [node distance=8cm, right of=A] {$\prod \varprojlim Hom (   H_{n}(F_{\alpha , \beta}),G)$};
\draw[->] (A) to node [above]{$\prod {\varprojlim} Hom ( {i_{\alpha , \beta }})^*$ }(B);
\end{tikzpicture}
\end{equation}
is an isomorphism. By the isomorphisms (3.4), (3.10) and the diagram (3.8) we obtain that the homomorpism 
\begin{equation}
\begin{tikzpicture}
\node (A) {${\varprojlim} Ext ({H}_{n-1}(F_{\beta}),G)$};
\node (B) [node distance=8cm, right of=A] {$\prod \varprojlim  Ext (H_{n-1}(F_{\alpha , \beta}),G)$};
\draw[->] (A) to node [above]{$\prod {\varprojlim} Ext ( {i_{\alpha , \beta }})^*$ }(B);
\end{tikzpicture}
\end{equation}
is isomorphism as well. Note that ${H}_{n-1}(F_{\beta})$ and ${H}_{n-1}(F_{\alpha ,\beta})$ groups are finitely generated and so by proposition 1.2 and corollary 1.5 of \cite{4}, we have the following diagram:

\begin{equation}
\begin{tikzpicture}
\node (A) {0};
\node (B) [node distance=3cm, right of=A] {${\varprojlim}^1 Hom(H_{n-1}(F_{ \beta}),G)$};
\node (C) [node distance=6cm, right of=B] {$Ext (\varinjlim H_{n-1}(F_{ \beta}),G)$};
\node (F) [node distance=3cm, right of=C] {};

\node (D) [node distance=2cm, below of=C1] {$\varprojlim Ext(H_{n-1}(F_{ \beta}),G)$};
\node (N) [node distance=3.3cm, left of=D] {};
\node (E) [node distance=3cm, right of=D] {0};
\draw[->] (A) to node [above]{}(B);
\draw[->] (B) to node [above]{}(C);
\draw[->] (C) to node [above]{}(F);
\draw[->] (N) to node [above]{}(D);
\draw[->] (D) to node [above]{}(E);

\node (A1) [node distance=2cm, below of=A] {0};
\node (B1) [node distance=2cm, below of=B] {$\prod {\varprojlim}^1 Hom(H_{n-1}(F_{\alpha, \beta}),G)$};
\node (C1) [node distance=2cm, below of=C] {$\prod Ext (\varinjlim H_{n-1}(F_{\alpha, \beta}),G)$};
\node (F1) [node distance=3cm, right of=C1] {};
\node (D1) [node distance=2cm, below of=D] {$\prod \varprojlim Ext(H_{n-1}(F_{\alpha, \beta}),G)$};
\node (E1) [node distance=3cm, right of=D1] {0,};
\node (N1) [node distance=3.3cm, left of=D1] {};

\draw[->] (A1) to node [above]{}(B1);
\draw[->] (B1) to node [above]{}(C1);
\draw[->] (C1) to node [above]{}(F1);
\draw[->] (D1) to node [above]{}(E1);
\draw[->] (N1) to node [above]{}(D1);

\draw[->] (B) to node [right]{$\prod {\varprojlim}^1 Hom (  {i_{\alpha , \beta }})^*$}(B1);
\draw[->] (C) to node [right]{$\prod Ext \varinjlim (  {i_{\alpha , \beta }})^{*}$}(C1);
\draw[->] (D) to node [right]{$\prod \varprojlim Ext(  {i_{\alpha , \beta }})^*$}(D1);
\end{tikzpicture}
\end{equation}
where 
\begin{equation}
\begin{tikzpicture}
\node (A) {$Ext ({\varinjlim}  {H}_{n-1}(F_{\beta}),G)$};
\node (B) [node distance=8cm, right of=A] {$\prod Ext( {\varinjlim}  {H}_{n-1}(F_{\alpha ,\beta}),G)$};
\draw[->] (A) to node [above]{$\prod Ext~ {\varinjlim} ( {i_{\alpha , \beta }})^*$ }(B);
\end{tikzpicture}
\end{equation}
is an isomorphism. Indeed, 
\begin{equation}
\begin{tikzpicture}
\node (A) {$Ext ({\varinjlim} {H}_{n-1}(F_{\beta}),G)$};
\node (B) [node distance=4.5cm, right of=A] {$Ext ({H}^s_{n-1}(X),G)$};
\node (C) [node distance=5cm, right of=B] {$Ext (\sum {H}^s_{n-1}(X_\alpha),G)$};
\node (D) [node distance=3cm, right of=C] {};

\node (A1) [node distance=2cm, below of=A] {$\prod Ext ( {H}^s_{n-1}(X_\alpha),G)$};
\node (A2) [node distance=2.5cm, left of=A1] {};
\node (B1) [node distance=5.5cm, right of=A1] {$\prod Ext ( \varinjlim {H}_{n-1}(F_{\alpha , \beta}),G).$};
\node (D1) [node distance=2cm, below of=D] {};

\draw[->] (A) to node [above]{$\approx $ }(B);
\draw[->] (B) to node [above]{$\approx $ }(C);
\draw[->] (C) to node [above]{$\approx $ }(D);

\draw[->] (A2) to node [above]{$\approx $ }(A1);
\draw[->] (A1) to node [above]{$\approx $ }(B1);

\end{tikzpicture}
\end{equation}
Consequently, by the isomorphisms (3.11), (3.13) and the  commutative diagram (3.12) we obtain that the homomorphism 
\begin{equation}
\begin{tikzpicture}
\node (A) {${\varprojlim}^1 Hom(  {H}_{n-1}(F_{\beta}),G)$};
\node (B) [node distance=8cm, right of=A] {$\prod  {\varprojlim}^1 Hom(  {H}_{n-1}(F_{\alpha ,\beta}),G)$};
\draw[->] (A) to node [above]{$\prod  {\varprojlim}^1 Hom ( {i_{\alpha , \beta }})^*$ }(B);
\end{tikzpicture}
\end{equation}
is an isomorphism. 

While the groups $H_{n}(F_{ \beta};G)$ and $H_{n}(F_{\alpha , \beta};G)$ are finitely generated, by Corollary 1.5 of \cite{4}  we have the following commutative diagram:
\begin{equation}
\begin{tikzpicture}
\node (B) {};
\node (C) [node distance=2.5cm, right of=B] {${\varprojlim}^1 H^{n}(F_{ \beta};G)$};
\node (D) [node distance=5cm, right of=C] {${\varprojlim}^1 Hom(H_{n}(F_{ \beta}),G)$};
\node (E) [node distance=3cm, right of=D] {};
\draw[->] (C) to node [above]{$\sim$}(D);

\node (B1) [node distance=2cm, below of=A] {};
\node (C1) [node distance=2.5cm, right of=B1] {$\prod {\varprojlim}^1 H^{n}(F_{\alpha, \beta};G)$};
\node (D1) [node distance=5cm, right of=C1] {$\prod {\varprojlim}^1 Hom(H_{n}(F_{\alpha, \beta}),G).$};
\node (E1) [node distance=3cm, right of=D1] {};

\draw[->] (C1) to node [above]{$\sim$}(D1);

\draw[->] (C) to node [right]{$\prod {\varprojlim}^1 (  {i_{\alpha , \beta }})^{*}$}(C1);
\draw[->] (D) to node [right]{$\prod {\varprojlim}^1 Hom(  {i_{\alpha , \beta }})^*$}(D1);
\end{tikzpicture}
\end{equation}
By the isomorphism (3.15) and the diagram (3.16) the homomorphism 
\begin{equation}
\begin{tikzpicture}
\node (A) {${\varprojlim}^{1} {H}^{*}(F_{\beta};G)$};
\node (B) [node distance=6cm, right of=A] {$\prod {\varprojlim}^{1} {H}^{*}(F_{\alpha ,\beta};G)$};
\draw[->] (A) to node [above]{$\prod {\varprojlim}^{1} ( {i_{\alpha , \beta }})^*$ }(B);
\end{tikzpicture}
\end{equation}
is an isomophism as well. Therefore, by the isomorphisms (3.4), (3.17) and the diagram (3.2), finally we obtained that 
\begin{equation}
\begin{tikzpicture}
\node (A) {${H}^{n}(X;G)$};
\node (B) [node distance=5cm, right of=A] {$\prod {H}^{n}(X_{\alpha };G)$};
\draw[->] (A) to node [above]{$\prod ( {i_{\alpha }})^*$ }(B);
\end{tikzpicture}
\end{equation}
is an isomorphism.
\end{proof}

\begin{thm} If $H^*$ is a cohomology  theory in the Inasaridze-Mdzinarishvili-Beridze sense defined on the category $\mathcal{K}^2_\mathcal{CW}$,  then it is a cohomology theory in the Berikashvili-Mdzinarishvili-Beridze sense.
\end{thm}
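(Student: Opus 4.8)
The plan is to verify, from the hypotheses at hand --- that $H^{*}$ is an extension of the singular theory from $\mathcal{K}^2_{\mathcal{P}ol_C}$ and that it satisfies the axioms EFSA and CSI on $\mathcal{K}^2_{\mathcal{CW}}$ --- the two properties defining the Berikashvili--Mdzinarishvili--Beridze sense: the universal coefficients formula (UCF) and the exactness axiom. The organising idea is to settle each statement first for an \emph{injective} coefficient group, where CSI reduces $H^{*}$ to the singular theory on compact subspaces, and then to descend to an arbitrary group $G$ by fixing a short exact sequence $0 \to G \to Q^{0} \to Q^{1} \to 0$ with $Q^{0}$ injective; since a quotient of a divisible group is divisible, $Q^{1}=Q^{0}/G$ is injective over $\mathbb{Z}$ as well, so both outer terms are injective, and EFSA turns this sequence into a long exact sequence of cohomology groups.

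\emph{Universal coefficients formula.} Let $G$ be injective. By CSI, $H^{n}(X;G)\cong\varprojlim H^{n}(F_{\alpha};G)$ over the directed family of compact subspaces $F_{\alpha}\subset X$, on each of which $H^{*}$ agrees with the singular cohomology (extension hypothesis; cf. the use of the universal coefficients formula in the proof of Theorem~3.2). The classical universal coefficients sequence there, together with $Ext(-,G)=0$, gives a natural isomorphism $H^{n}(F_{\alpha};G)\cong Hom(H^{s}_{n}(F_{\alpha}),G)$; since the groups $H^{s}_{*}(F_{\alpha})$ are finitely generated, Corollary~1.5 of \cite{4} yields ${\varprojlim}^{1}Hom(H^{s}_{*}(F_{\alpha}),G)=0$ and $\varprojlim Hom(H^{s}_{n}(F_{\alpha}),G)\cong Hom(\varinjlim H^{s}_{n}(F_{\alpha}),G)=Hom(H^{s}_{n}(X),G)$, the last equality expressing the compact support of singular homology. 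Hence $H^{n}(X;G)\cong Hom(H^{s}_{n}(X),G)$ naturally in $X$, i.e. the UCF with vanishing $Ext$ term. For arbitrary $G$, feeding $0 \to G \to Q^{0} \to Q^{1} \to 0$ into EFSA gives the natural long exact sequence
\begin{equation*}
\cdots \to H^{n-1}(X;Q^{0}) \to H^{n-1}(X;Q^{1}) \to H^{n}(X;G) \to H^{n}(X;Q^{0}) \to H^{n}(X;Q^{1}) \to \cdots ,
\end{equation*}
whence the short exact sequence $0 \to \operatorname{coker}(H^{n-1}(X;Q^{0})\to H^{n-1}(X;Q^{1})) \to H^{n}(X;G) \to \ker(H^{n}(X;Q^{0})\to H^{n}(X;Q^{1})) \to 0$. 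Using the isomorphisms of the injective case and the $Hom$--$Ext$ long exact sequence of $0 \to G \to Q^{0} \to Q^{1} \to 0$ (with $Ext(-,Q^{0})=0$), the cokernel identifies with $Ext(H^{s}_{n-1}(X),G)$ and the kernel with $Hom(H^{s}_{n}(X),G)$; this is the required UCF sequence, and it is functorial because EFSA and the injective-case isomorphism are.

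\emph{Exactness axiom.} Let $(X,A)\in\mathcal{K}^2_{\mathcal{CW}}$. For injective $G$, consider the directed family of compact pairs $(F_{\alpha},E_{\alpha})$ with $E_{\alpha}=F_{\alpha}\cap A$ (as in condition $2_{NT}$): the pair sequence of $H^{*}$ at each $(F_{\alpha},E_{\alpha})$ is exact and, $G$ being injective, it is $Hom(-,G)$ applied to the singular homology sequence of $(F_{\alpha},E_{\alpha})$, so every term is of the form $Hom(H,G)$ with $H$ finitely generated, all ${\varprojlim}^{1}$ vanish (Corollary~1.5 of \cite{4}), and $\varprojlim$ is exact on these sequences. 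Identifying $\varprojlim H^{n}(F_{\alpha};G)\cong H^{n}(X;G)$ and $\varprojlim H^{n}(E_{\alpha};G)\cong H^{n}(A;G)$ via CSI, $\varprojlim H^{n}(F_{\alpha},E_{\alpha};G)\cong Hom(H^{s}_{n}(X,A),G)$, and checking that the canonical comparison $H^{n}(X,A;G)\to\varprojlim H^{n}(F_{\alpha},E_{\alpha};G)$ is an isomorphism compatible with the $\delta$-homomorphisms, one obtains exactness of the $H^{*}$-pair sequence of $(X,A)$ for injective $G$. To reach an arbitrary $G$, apply EFSA to $0 \to G \to Q^{0} \to Q^{1} \to 0$ simultaneously at $(X,A)$, at $X$ and at $A$: by naturality the three coefficient long exact sequences form a commutative diagram in which the $Q^{0}$- and $Q^{1}$-pair sequences are exact, and a diagram chase transfers exactness to the $G$-pair sequence. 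Together with the UCF established above and the extension hypothesis, this shows $H^{*}$ is a cohomology theory in the Berikashvili--Mdzinarishvili--Beridze sense.

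The step I expect to be the main obstacle is the relative part of the exactness argument: CSI and UCF are stated only for absolute spaces, so one must genuinely lift the compact-support computation to compact \emph{subpairs} --- in particular prove that the canonical comparison $H^{n}(X,A;G)\to\varprojlim H^{n}(F_{\alpha},E_{\alpha};G)$ is an isomorphism commuting with $\delta$ --- by playing CSI for the absolute spaces $X$ and $A$ against the exactness of $H^{*}$ on the compact subpairs and the cohomological-sequence structure of $H^{*}$. The descent from injective to arbitrary coefficients and the accompanying $Hom$/$Ext$ bookkeeping are, by comparison, routine.
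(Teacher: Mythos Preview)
Your derivation of the universal coefficients formula is essentially the paper's own argument (equations (3.19)--(3.31)): for injective $G_{0}$ use CSI together with the compact-polyhedron UCF and $Ext(-,G_{0})=0$ to obtain $H^{n}(X;G_{0})\cong\varprojlim H^{n}(F_{\beta};G_{0})\cong\varprojlim Hom(H_{n}(F_{\beta}),G_{0})\cong Hom(H^{s}_{n}(X),G_{0})$; then for arbitrary $G$ apply EFSA to an injective resolution $0\to G\to G'\to G''\to 0$, extract the kernel/cokernel short exact sequence, and identify these with $Hom(H^{s}_{n}(X),G)$ and $Ext(H^{s}_{n-1}(X),G)$ via the $Hom$--$Ext$ sequence. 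Your explicit remark that $G''=G'/G$ is again injective (as a quotient of a divisible group) is a point the paper uses without comment.

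Where you diverge from the paper is in treating the exactness axiom: the paper's proof stops at (3.31) and does not verify exactness at all. The obstacle you isolate is genuine. Both CSI (formula (1.9)) and EFSA (formula (1.8)) are stated only for absolute spaces, so neither axiom directly constrains the relative groups $H^{n}(X,A;G)$; in particular no comparison $H^{n}(X,A;G)\to\varprojlim H^{n}(F_{\alpha},E_{\alpha};G)$ is supplied by the hypotheses, and your injective-coefficient step needs exactly this to be an isomorphism compatible with $\delta$. The descent to arbitrary $G$ is also not the routine chase you suggest: even granting exact pair sequences for $Q^{0}$ and $Q^{1}$ and exact absolute coefficient sequences at $X$ and $A$, no standard diagram lemma forces exactness of the $G$-pair sequence, since the relative terms $H^{n}(X,A;G)$ are not tied to anything by the available hypotheses. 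In short, your UCF argument is correct and coincides with the paper's; your exactness discussion goes beyond what the paper proves, and the difficulty you flag there is real and is simply left unaddressed in the paper.
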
 
\begin{proof} For each space $X \in \mathcal{K}_\mathcal{CW}$ consider the direct system $ {\bf {F}} = \{ F_{\beta} \}_{\beta \in \mathcal{B}}$ of all compact subspaces. In this case, for each injective group $G_0$ we have an isomorphism:
\begin{equation}
 H^n(X;G_0) \approx \varprojlim H^n(F_\beta;G_0).
\end{equation}
Each $F_\beta$ has homotopy type of compact polyhedron, and so there is an exact sequence (the universal coefficients formula):
\begin{equation}
0 \to Ext(H_{n-1}(F_{\beta}),G_0) \to H^n (F_{\beta};G_0) \to Hom(H_n(F_{\beta}),G_0) \to 0,
\end{equation}
which induces the long exact sequence:
\begin{equation}
\begin{tikzpicture}
\node (A) {0};
\node (B) [node distance=3cm, right of=A] {$\varprojlim Ext(H_{n-1}(F_{\beta}),G_0)$};
\node (C) [node distance=4cm, right of=B] {$\varprojlim H^n (F_{\beta};G_0)$};
\node (D) [node distance=4cm, right of=C] {$\varprojlim Hom(H_n(F_{\beta}),G_0)$};
\node (E) [node distance=3cm, right of=D] {};

\draw[->] (A) to node [above]{}(B);
\draw[->] (B) to node [above]{}(C);
\draw[->] (C) to node [above]{}(D);
\draw[->] (D) to node [above]{}(E);

\node (A1) [node distance=2cm, below of=A] {};
\node (B1) [node distance=2cm, below of=B] {${\varprojlim}^1 Ext(H_{n-1}(F_{\beta}),G_0)$};
\node (C1) [node distance=2cm, below of=C] {${\varprojlim}^1 H^n (F_{\beta};G_0)$};
\node (D1) [node distance=2cm, below of=D] {${\varprojlim}^1 Hom(H_n(F_{\beta}),G_0)$};
\node (E1) [node distance=2cm, below of=E] {$\dots ~.$};

\draw[->] (A1) to node [above]{}(B1);
\draw[->] (B1) to node [above]{}(C1);
\draw[->] (C1) to node [above]{}(D1);
\draw[->] (D1) to node [above]{}(E1);

\end{tikzpicture}
\end{equation}
Note that for each injective group $G_0$ the functor $Ext(-,G_0)$ is trivial and by (3.21) we obtain the isomorphism:
\begin{equation}
\begin{tikzpicture}
\node (A) {${\varprojlim} {H}^{n}(F_\beta;G_0)$};
\node (B) [node distance=5cm, right of=A] {${\varprojlim} Hom ( {H}_{n}(F_\beta),G_0).$};
\draw[->] (A) to node [above]{$\approx$ }(B);
\end{tikzpicture}
\end{equation}
On the other hand, 
\begin{equation}
\begin{tikzpicture}
\node (A) {${\varprojlim} Hom ( {H}_{n}(F_\beta),G_0)$};
\node (B) [node distance=4.5cm, right of=A] {$Hom ({\varinjlim} {H}_{n}(F_\beta),G_0)$};
\node (C) [node distance=4.5cm, right of=B] {$Hom ({H}^s_{n}(X),G_0).$};
\draw[->] (A) to node [above]{$\approx$ }(B);
\draw[->] (B) to node [above]{$\approx$ }(C);
\end{tikzpicture}
\end{equation}
Therefore, for each injective group $G_0$ we have
\begin{equation}
\begin{tikzpicture}
\node (A) {${H}^{n}(X;G_0)$};
\node (B) [node distance=3.5cm, right of=A] {${\varprojlim}  H^n(F_\beta ;G_0)$};
\node (C) [node distance=4cm, right of=B] {$Hom (H^s_n (X) ,G_0).$};
\draw[->] (A) to node [above]{$\approx$ }(B);
\draw[->] (B) to node [above]{$\approx$ }(C);
\end{tikzpicture}
\end{equation}

Consider any abelian group $G$ and the corresponding injective resolution:
\begin{equation}
 ~~0 \to G \to G' \to G'' \to 0.
\end{equation}
Apply sequence (3.25) by the cohomological bifunctor $H^*(X;-)$, which gives the following long exact sequence:
\begin{equation} \dots  \to H^{n-1}(X;G') \to  H^{n-1}(X;G'') \to  H^n (X;G) \to H^{n}(X;G') \to  H^{n}(X;G'') \to \dots ,
\end{equation}
which induces the following exact sequence:
\begin{equation}
\begin{tikzpicture}

\node (A) {0};
\node (B) [node distance=4.5cm, right of=A] {$ Coker( H^{n-1} (X;G') \to  H^{n-1}(X;G''))$};
\node (C) [node distance=5cm, right of=B] {$ H^n (X;G)$};
\node (D) [node distance=2cm, right of=C] {};

\draw[->] (A) to node [above]{}(B);
\draw[->] (B) to node [above]{}(C);
\draw[->] (C) to node [above]{}(D);

\node (A1) [node distance=1.5cm, below of=A] {};
\node (B1) [node distance=1.5cm, below of=B] {$Ker( H^n (X;G') \to  H^n(X;G''))$};
\node (C1) [node distance=1.5cm, below of=C] {0.};

\draw[->] (A1) to node [above]{}(B1);
\draw[->] (B1) to node [above]{}(C1);

\end{tikzpicture}
\end{equation}
On the other hand, if we apply the functor $Hom (H^s_n(X),-)$ to the sequence (3.25) and take in to account that the groups $G'$ and $G''$ are injective, then we obtain the following exact sequence:

\begin{equation}
\begin{tikzpicture}

\node (A) {0};
\node (B) [node distance=2.5cm, right of=A] {$ Hom (H^s_n(X),G)$};
\node (C) [node distance=4cm, right of=B] {$ Hom ( H^s_n(X),G')$};
\node (D) [node distance=2.5cm, right of=C] {};

\draw[->] (A) to node [above]{}(B);
\draw[->] (B) to node [above]{}(C);
\draw[->] (C) to node [above]{}(D);

\node (A1) [node distance=1.5cm, below of=A] {};
\node (B1) [node distance=1.5cm, below of=B] {$ Hom (H^s_n(X),G'')$};
\node (C1) [node distance=1.5cm, below of=C] {$ Ext ( H^s_n(X),G)$};
\node (D1) [node distance=1.5cm, below of=D] {0.};

\draw[->] (A1) to node [above]{}(B1);
\draw[->] (B1) to node [above]{}(C1);
\draw[->] (C1) to node [above]{}(D1);

\end{tikzpicture}
\end{equation}
Therefore, for each integer $n \in \mathbb{Z}$ we have
\begin{equation}
 Ker( Hom( H^s_n(X),G') \to  Hom( H^s_n(X),G'')) \approx Hom(H^s_n(X),G),
\end{equation}
\begin{equation}
 Coker( Hom(H^s_n(X),G') \to  Hom( H^s_n(X),G'')) \approx Ext( H^s_n(X),G).
\end{equation}
Therefore, by (3.24), (3.29) and (3.30), the sequence (3.27) turned into the following sequence:
\begin{equation}
\begin{tikzpicture}

\node (A) {0};
\node (B) [node distance=2.5cm, right of=A] {$ Ext(H_{n-1}^s(X),G)$};
\node (C) [node distance=3cm, right of=B] {$ H^n (X;G)$};
\node (D) [node distance=3cm, right of=C] {$Hom(H_n^s(X),G)$};
\node (E) [node distance=2.5cm, right of=D] {0.};

\draw[->] (A) to node [above]{}(B);
\draw[->] (B) to node [above]{}(C);
\draw[->] (C) to node [above]{}(D);
\draw[->] (D) to node [above]{}(E);

\end{tikzpicture}
\end{equation}
\end{proof}

\begin{thm} If $H^*$ is a cohomology  theory in the Berikashvili-Mdzinarishvili-Beridze sense defined on the category $\mathcal{K}^2_\mathcal{CW}$,  then it is a cohomology theory in the Mdzinarishvili sense.
\end{thm}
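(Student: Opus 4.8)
The plan is to verify directly the four defining conditions $1_{NT})$--$4_{NT})$ of a nontrivial internal extension (i.e.\ of a cohomology theory in the Mdzinarishvili sense). Conditions $1_{NT})$ and $4_{NT})$ are immediate from the definition of the Berikashvili--Mdzinarishvili--Beridze sense: such an $H^*$ is by hypothesis an extension of $h^*$ from $\mathcal{K}^2_{\mathcal{P}ol_C}$ to $\mathcal{K}^2_{\mathcal{CW}}$ and it satisfies the exactness axiom. Hence the whole content is the Milnor-type short exact sequence $2_{NT})$ together with its naturality square $3_{NT})$. Two facts will be used repeatedly: on $\mathcal{K}^2_{\mathcal{P}ol_C}$ the theory $H^*$ coincides with $h^*$, which, being the Eilenberg--Steenrod theory (unique on compact polyhedra), is naturally isomorphic to singular cohomology there; and, passing to the cofinal subsystem of finite subcomplex pairs, each compact subpair $(F_\alpha,E_\alpha)$ of a $CW$ pair may be taken to have the homotopy type of a compact polyhedral pair, so that $H^s_*(F_\alpha,E_\alpha)$ is finitely generated and the UCF holds for it.

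To obtain $2_{NT})$, fix $(X,A)\in\mathcal{K}^2_{\mathcal{CW}}$ with its direct system $\mathbf{F}=\{(F_\alpha,E_\alpha)\}$ of compact subpairs. First I would pass from the absolute UCF axiom to the relative universal coefficients formula
\[
0\to Ext(H^s_{n-1}(X,A),G)\to H^n(X,A;G)\to Hom(H^s_n(X,A),G)\to 0,
\]
deduced from the UCF sequences of $X$ and of $A$ via the exactness axiom and the five lemma --- the same absolute-to-relative reduction used in the proof of Theorem 2.1. Next, since singular homology has compact supports, $H^s_k(X,A)\cong\varinjlim_\alpha H^s_k(F_\alpha,E_\alpha)$ for every $k$. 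Applying $Hom(-,G)$ and $Ext(-,G)$ to this colimit of finitely generated groups and invoking the algebraic facts of \cite{4} (Proposition 1.2 and Corollary 1.5) --- namely $Hom(\varinjlim_\alpha M_\alpha,G)\cong\varprojlim_\alpha Hom(M_\alpha,G)$, the exact sequence $0\to{\varprojlim}^1_\alpha Hom(M_\alpha,G)\to Ext(\varinjlim_\alpha M_\alpha,G)\to\varprojlim_\alpha Ext(M_\alpha,G)\to 0$, and the vanishing of the higher derived limits ${\varprojlim}^k$, $k\ge 2$, of the systems that occur --- I would rewrite the two outer terms of the relative UCF sequence purely in terms of $\varprojlim$ and ${\varprojlim}^1$ of $Hom(H^s_*(F_\alpha,E_\alpha),G)$ and $Ext(H^s_*(F_\alpha,E_\alpha),G)$. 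On the other hand, applying $\varprojlim$ to the UCF sequences of the polyhedral subpairs $(F_\alpha,E_\alpha)$, where $H^n(F_\alpha,E_\alpha;G)$ already equals singular cohomology, yields (again by Corollary 1.5 of \cite{4}) that ${\varprojlim}^1_\alpha H^{n-1}(F_\alpha,E_\alpha;G)\cong{\varprojlim}^1_\alpha Hom(H^s_{n-1}(F_\alpha,E_\alpha),G)$ and that $\varprojlim_\alpha H^n(F_\alpha,E_\alpha;G)$ is the extension of $\varprojlim_\alpha Hom(H^s_n(F_\alpha,E_\alpha),G)$ by $\varprojlim_\alpha Ext(H^s_{n-1}(F_\alpha,E_\alpha),G)$. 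Comparing the two descriptions along the canonical homomorphisms $H^n(X,A;G)\to H^n(F_\alpha,E_\alpha;G)$, and using naturality of the relative UCF and of the compact-supports isomorphism to see that they are compatible, a diagram chase then produces exactly
\[
0\to{\varprojlim}^1_\alpha H^{n-1}(F_\alpha,E_\alpha;G)\to H^n(X,A;G)\to\varprojlim_\alpha H^n(F_\alpha,E_\alpha;G)\to 0 .
\]

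For $3_{NT})$, given $f\colon(X,A)\to(Y,B)$ in $\mathcal{K}^2_{\mathcal{CW}}$, commutativity of the square relating ${\varprojlim}^1\tilde f^{\,*}$ and $f^*$ follows by carrying naturality through every step above: the UCF sequence is functorial by hypothesis, the compact-supports isomorphism $H^s_*(-)\cong\varinjlim H^s_*(F_\alpha,E_\alpha)$ is natural in the pair, and the $\varprojlim$/${\varprojlim}^1$ exact sequences of \cite{4} are natural for morphisms of inverse systems; since $f$ maps compact subpairs of $X$ into compact subpairs of $Y$ and so induces $\tilde f^{\,*}$, the monomorphism ${\varprojlim}^1_\alpha H^{n-1}(F_\alpha,E_\alpha;G)\hookrightarrow H^n(X,A;G)$ is natural, which is $3_{NT})$. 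With $1_{NT})$--$4_{NT})$ verified, $H^*$ is a nontrivial internal extension, i.e.\ a cohomology theory in the Mdzinarishvili sense.

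I expect the main obstacle to lie in the diagram chase of the second step: gluing the relative UCF of $(X,A)$ to the inverse limits of the UCF sequences of the $(F_\alpha,E_\alpha)$ requires checking that the three-step filtration of $H^n(X,A;G)$ (by ${\varprojlim}^1 Hom(H^s_{n-1}(F_\alpha,E_\alpha),G)$ inside $Ext(H^s_{n-1}(X,A),G)$ inside $H^n(X,A;G)$) matches, compatibly with the canonical projections, the corresponding filtration of $\varprojlim_\alpha H^n(F_\alpha,E_\alpha;G)$; a secondary point is pinning down the precise vanishing of all ${\varprojlim}^k$ with $k\ge 2$ guaranteed by \cite{4}, which rests on the finite generation of the homology of the compact polyhedral subpairs.
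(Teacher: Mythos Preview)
Your core argument coincides with the paper's: both compare the UCF axiom for $X$ with the inverse limit of the UCF sequences of the compact subspaces $F_\beta$ via the canonical map $\varphi\colon H^n(X;G)\to\varprojlim H^n(F_\beta;G)$; since the $Hom$ column is an isomorphism ($Hom(\varinjlim H_n(F_\beta),G)\cong\varprojlim Hom(H_n(F_\beta),G)$), the kernel and cokernel of $\varphi$ agree with those of the $Ext$ column $\psi$, and Proposition~1.2 together with Corollary~1.5 of \cite{4} identify these as ${\varprojlim}^1 Hom(H_{n-1}(F_\beta),G)\cong{\varprojlim}^1 H^{n-1}(F_\beta;G)$ and $0$. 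This is exactly the computation the paper carries out in (3.36)--(3.44).

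Where your write-up goes beyond the paper is in attempting the relative case of $2_{NT})$ and condition $3_{NT})$, which the paper's proof simply omits (it argues only the absolute sequence). Your claimed derivation of a relative UCF ``from the UCF sequences of $X$ and of $A$ via the exactness axiom and the five lemma --- the same absolute-to-relative reduction used in the proof of Theorem~2.1'' is, however, not valid as stated. In Theorem~2.1 that reduction shows two \emph{homomorphisms} $i_0^*,i_1^*$ coincide on pairs once they coincide on spaces: a genuine five-lemma between the long exact sequences of $(X\times I,A\times I)$ and $(X,A)$. Manufacturing a short exact sequence
\[
0\to Ext(H^s_{n-1}(X,A),G)\to H^n(X,A;G)\to Hom(H^s_n(X,A),G)\to 0
\]
from the two absolute ones is a different problem: there is no a~priori map from $Ext(H^s_{n-1}(X,A),G)$ into $H^n(X,A;G)$ against which to run a five-lemma, and the snake lemma applied to the restriction map between the absolute UCF sequences yields kernels and cokernels of $Ext(i_*,G)$ and $Hom(i_*,G)$, not the relative $Ext$ and $Hom$ terms. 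So the absolute portion of your proposal matches the paper exactly, but the passage to pairs needs a different mechanism (or the UCF axiom must be assumed for pairs from the outset); your naturality argument for $3_{NT})$, by contrast, is sound once $2_{NT})$ is in hand.
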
 

\begin{proof} For each space $X \in \mathcal{K}^2_\mathcal{CW}$ consider the direct system $ {\bf {F}} = \{ F_{\beta} \}_{\beta \in \mathcal{B}}$ of all compact subspaces. In this case, each space $F_\beta$ has a homotopy type of a compact  polyhedron and so there exists the following short exact sequence:
\begin{equation}
0 \to Ext({H}_{n-1}(F_{\beta}),G) \to H^n (F_{\beta};G) \to Hom({H}_n(F_{\beta}),G) \to 0,
\end{equation}
which induces the following long exact sequence:
\begin{equation}
\begin{tikzpicture}
\node (A) {0};
\node (B) [node distance=3cm, right of=A] {$\varprojlim Ext(H_{n-1}(F_{\beta}),G)$};
\node (C) [node distance=4cm, right of=B] {$\varprojlim H^n (F_{\beta};G)$};
\node (D) [node distance=4cm, right of=C] {$\varprojlim Hom(H_n(F_{\beta}),G)$};
\node (E) [node distance=3cm, right of=D] {};

\draw[->] (A) to node [above]{}(B);
\draw[->] (B) to node [above]{}(C);
\draw[->] (C) to node [above]{}(D);
\draw[->] (D) to node [above]{}(E);

\node (A1) [node distance=2cm, below of=A] {};
\node (B1) [node distance=2cm, below of=B] {${\varprojlim}^1 Ext(H_{n-1}(F_\beta),G)$};
\node (C1) [node distance=2cm, below of=C] {${\varprojlim}^1 H^n (F_\beta;G)$};
\node (D1) [node distance=2cm, below of=D] {${\varprojlim}^1 Hom(H_n(F_\beta),G)$};
\node (E1) [node distance=2cm, below of=E] {$\dots~.$};

\draw[->] (A1) to node [above]{}(B1);
\draw[->] (B1) to node [above]{}(C1);
\draw[->] (C1) to node [above]{}(D1);
\draw[->] (D1) to node [above]{}(E1);

\end{tikzpicture}
\end{equation}
By Corollary 1.5 in \cite{4} we have:
\begin{equation}
{\varprojlim}^r Ext({H}_{n-1}(F_{\beta}),G)=0, ~~r \geq 1. 
\end{equation}
Therefore, by (3.33) and (3.34) we obtain the exact sequence:
\begin{equation} 
0 \to \varprojlim Ext({H}_{n-1}(F_{\beta}),G) \to \varprojlim H^n (F_{\beta};G) \to \varprojlim Hom({H}_n(F_{\beta}),G) \to 0.
\end{equation}
Naturally, there exists a commutative diagram:

\begin{equation}
\begin{tikzpicture}
\node (A) {0};
\node (B) [node distance=3cm, right of=A] {$Ext(H^s_{n-1}(X),G)$};
\node (C) [node distance=4cm, right of=B] {$ H^n (X;G)$};
\node (D) [node distance=4cm, right of=C] {$ Hom(H^s_n(X),G)$};
\node (E) [node distance=3cm, right of=D] {0};

\draw[->] (A) to node [above]{}(B);
\draw[->] (B) to node [above]{}(C);
\draw[->] (C) to node [above]{}(D);
\draw[->] (D) to node [above]{}(E);

\node (A1) [node distance=2cm, below of=A] {0};
\node (B1) [node distance=2cm, below of=B] {${\varprojlim} Ext(H_{n-1}(F_{\beta}),G)$};
\node (C1) [node distance=2cm, below of=C] {${\varprojlim} H^n (F_{\beta};G)$};
\node (D1) [node distance=2cm, below of=D] {${\varprojlim} Hom(H_n(F_{\beta}),G)$};
\node (E1) [node distance=2cm, below of=E] {0.};

\draw[->] (A1) to node [above]{}(B1);
\draw[->] (B1) to node [above]{}(C1);
\draw[->] (C1) to node [above]{}(D1);
\draw[->] (D1) to node [above]{}(E1);

\draw[->] (B) to node [right]{$\psi$}(B1);
\draw[->] (C) to node [right]{$\varphi$}(C1);
\draw[->] (D) to node [right]{$\chi$}(D1);

\end{tikzpicture}
\end{equation}
On the other hand by (3.23), we have $ \chi : Hom(H^s_n(X),G)=Hom( \varinjlim H_n(F_\beta ),G) \to \varprojlim Hom({H}_n(F_{\beta}),G)$ is the isomorphism and so
\begin{equation}
Ker \psi \approx Ker \varphi,    ~~~ Coker \psi \approx Coker \varphi.
\end{equation}
Therefore, we obtain the following commutative diagram of the exact sequences:

\begin{equation}
\begin{tikzpicture}
\node (A) {0};
\node (B) [node distance=2cm, right of=A] {$Ker \psi$};
\node (C) [node distance=3cm, right of=B] {$Ext(H^s_{n-1}(X),G)$};
\node (D) [node distance=4.2cm, right of=C] {$\varprojlim Ext (H_{n-1} (F_\beta),G)$};
\node (E) [node distance=3.2cm, right of=D] {$ Coker \psi$};
\node (F) [node distance=2cm, right of=E] {0};

\draw[->] (A) to node [above]{}(B);
\draw[->] (B) to node [above]{}(C);
\draw[->] (C) to node [above]{$\psi$}(D);
\draw[->] (D) to node [above]{}(E);
\draw[->] (E) to node [above]{}(F);

\node (A1) [node distance=2cm, below of=A] {0};
\node (B1) [node distance=2cm, below of=B] {$Ker \varphi$};
\node (C1) [node distance=2cm, below of=C] {$H^{n} (X;G)$};
\node (D1) [node distance=2cm, below of=D] {${\varprojlim} H^{n}(F_{\beta};G)$};
\node (E1) [node distance=2cm, below of=E] {$Coker \varphi$};
\node (F1) [node distance=2cm, below of=F] {0.};

\draw[->] (A1) to node [above]{}(B1);
\draw[->] (B1) to node [above]{}(C1);
\draw[->] (C1) to node [above]{$\varphi$}(D1);
\draw[->] (D1) to node [above]{}(E1);
\draw[->] (E1) to node [above]{}(F1);

\draw[->] (B) to node [right]{$\approx$}(B1);
\draw[->] (C) to node [right]{}(C1);
\draw[->] (D) to node [right]{}(D1);
\draw[->] (E) to node [right]{$\approx$}(E1);

\end{tikzpicture}
\end{equation}
On the other hand, $H^s_{n-1}(X) \approx \varinjlim H_{n-1}(F_\beta)$ and so we obtain:

\begin{equation}
\begin{tikzpicture}
\node (A) {0};
\node (B) [node distance=2cm, right of=A] {$Ker \psi$};
\node (C) [node distance=3cm, right of=B] {$Ext(\varinjlim H_{n-1}(F_\beta),G)$};
\node (D) [node distance=4.5cm, right of=C] {$\varprojlim Ext (H_{n-1} (F_\beta),G)$};
\node (E) [node distance=3.2cm, right of=D] {$ Coker \psi$};
\node (F) [node distance=2cm, right of=E] {0};

\draw[->] (A) to node [above]{}(B);
\draw[->] (B) to node [above]{}(C);
\draw[->] (C) to node [above]{$\psi$}(D);
\draw[->] (D) to node [above]{}(E);
\draw[->] (E) to node [above]{}(F);

\node (A1) [node distance=2cm, below of=A] {0};
\node (B1) [node distance=2cm, below of=B] {$Ker \varphi$};
\node (C1) [node distance=2cm, below of=C] {$H^{n} (X;G)$};
\node (D1) [node distance=2cm, below of=D] {${\varprojlim} H^{n}(F_{\beta};G)$};
\node (E1) [node distance=2cm, below of=E] {$Coker \varphi$};
\node (F1) [node distance=2cm, below of=F] {0.};

\draw[->] (A1) to node [above]{}(B1);
\draw[->] (B1) to node [above]{}(C1);
\draw[->] (C1) to node [above]{$\varphi$}(D1);
\draw[->] (D1) to node [above]{}(E1);
\draw[->] (E1) to node [above]{}(F1);

\draw[->] (B) to node [right]{$\approx$}(B1);
\draw[->] (C) to node [right]{}(C1);
\draw[->] (D) to node [right]{}(D1);
\draw[->] (E) to node [right]{$\approx$}(E1);

\end{tikzpicture}
\end{equation}
By Proposition 1.2 in \cite{4} for the direct system $\{ H_{n-1}(F_\beta) \}_{\beta \in \mathcal{B}}$ of the homological groups we have:
\begin{equation}
\begin{tikzpicture}
\node (A) {0};
\node (B) [node distance=3.5cm, right of=A] {${\varprojlim}^1 Hom (H_{n-1}(F_\beta),G)$};
\node (C) [node distance=5.5cm, right of=B] {$ Ext ({\varinjlim} H_{n-1}(F_\beta),G)$};
\node (D) [node distance=3.5cm, right of=C] {};

\node (B1) [node distance=2cm, below of=B] {${\varprojlim} Ext (H_{n-1}(F_\beta),G)$};
\node (C1) [node distance=2cm, below of=C] {${\varprojlim}^2 Hom (H_{n-1}(F_\beta),G)$};
\node (D1) [node distance=2cm, below of=D] {0.};
\node (A1) [node distance=2cm, below of=A] {};

\draw[->] (A) to node [above]{}(B);
\draw[->] (B) to node [above]{$\psi$}(C);
\draw[->] (C) to node [above]{}(D);

\draw[->] (A1) to node [above]{}(B1);
\draw[->] (B1) to node [above]{}(C1);
\draw[->] (C1) to node [above]{}(D1);

\end{tikzpicture}
\end{equation}
Consequently, by (3.39) we obtain the exact sequence:
\begin{equation}
\begin{tikzpicture}
\node (A) {0};
\node (B) [node distance=3.5cm, right of=A] {${\varprojlim}^1 Hom (H_{n-1}(F_\beta),G)$};
\node (C) [node distance=5cm, right of=B] {$  H^n(X;G)$};
\node (D) [node distance=3cm, right of=C] {};

\node (B1) [node distance=2cm, below of=B] {${\varprojlim} H^{n}(F_\beta;G)$};
\node (C1) [node distance=2cm, below of=C] {${\varprojlim}^2 Hom (H_{n-1}(F_\beta),G)$};
\node (D1) [node distance=2cm, below of=D] {0.};
\node (A1) [node distance=2cm, below of=A] {};

\draw[->] (A) to node [above]{}(B);
\draw[->] (B) to node [above]{}(C);
\draw[->] (C) to node [above]{}(D);

\draw[->] (A1) to node [above]{}(B1);
\draw[->] (B1) to node [above]{}(C1);
\draw[->] (C1) to node [above]{}(D1);

\end{tikzpicture}
\end{equation}
The homology groups $H_{n-1}(F_\beta)$ are finitely generated and by Corollary 1.5 (see 2e, 3e) \cite{4} we have:
\begin{equation}
{\varprojlim}^2 Hom(H_{n-1}(F_\beta),G)=0.
\end{equation}
Using the the long exact sequence (3.33) and equality (3.34), we have
\begin{equation}
{\varprojlim}^1 Hom(H_{n-1}(F_\beta),G)={\varprojlim}^1 H^{n-1}(F_\beta; G).
\end{equation}
Consequently, by (3.41) we obtain the exact sequence:
\begin{equation}
\begin{tikzpicture}
\node (A) {0};
\node (B) [node distance=2.5cm, right of=A] {${\varprojlim}^1 H^{n-1}(F_\beta;G)$};
\node (C) [node distance=3.5cm, right of=B] {$  H^n(X;G)$};
\node (D) [node distance=3cm, right of=C] {${\varprojlim} H^{n}(F_\beta;G)$};
\node (E) [node distance=2cm, right of=D] {0.};

\draw[->] (A) to node [above]{}(B);
\draw[->] (B) to node [above]{}(C);
\draw[->] (C) to node [above]{}(D);
\draw[->] (D) to node [above]{}(E);

\end{tikzpicture}
\end{equation}

\end{proof}

\begin{thm} If $H^*$ is a cohomology  theory  in the Mdzinarishvili sense defined on the category $\mathcal{K}^2_\mathcal{CW}$,  then it is a cohomology theory in the Inasaridze-Mdzinarishvili-Beridze sense.
\end{thm}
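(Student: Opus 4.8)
By definition I must check that $H^*$ is an extension of $h^*$ from $\mathcal{K}^2_{\mathcal{P}ol_C}$ to $\mathcal{K}^2_{\mathcal{CW}}$ --- which is exactly condition $1_{NT}$ of the Mdzinarishvili sense --- and that it satisfies the axioms CSI and EFSA. I would settle CSI first. Fix a space $X$, let $\mathbf{F}=\{F_\beta\}_{\beta\in\mathcal{B}}$ be the direct system of its compact (polyhedral) subspaces, and apply condition $2_{NT}$ in the absolute case to obtain the exact sequence $0\to{\varprojlim}^1 H^{n-1}(F_\beta;G)\to H^n(X;G)\to\varprojlim H^n(F_\beta;G)\to 0$. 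For an injective group $G_0$ the left-hand term vanishes: by $1_{NT}$ each $F_\beta$ carries the Eilenberg--Steenrod cohomology of a compact polyhedron, so the universal coefficients formula holds there and, since $Ext(-,G_0)=0$, yields a natural isomorphism $H^{n-1}(F_\beta;G_0)\cong Hom(H_{n-1}(F_\beta),G_0)$; hence ${\varprojlim}^1 H^{n-1}(F_\beta;G_0)\cong{\varprojlim}^1 Hom(H_{n-1}(F_\beta),G_0)$, which by Proposition 1.2 of \cite{4} embeds into $Ext(\varinjlim H_{n-1}(F_\beta),G_0)=Ext(H^s_{n-1}(X),G_0)=0$. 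Therefore $H^n(X;G_0)\cong\varprojlim H^n(F_\beta;G_0)$ via the canonical map, which is precisely CSI.

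For EFSA I would first upgrade $H^*$ to a theory satisfying the universal coefficients formula on all of $\mathcal{K}^2_{\mathcal{CW}}$ --- essentially running the computation in the proof of Theorem 3.3 in reverse. Starting from the $2_{NT}$ sequence I would feed in the UCF short exact sequences of the $F_\beta$: since the $H_*(F_\beta)$ are finitely generated, Corollary 1.5 of \cite{4} gives ${\varprojlim}^{r}Ext(H_{n-1}(F_\beta),G)=0$ for $r\ge 1$ and ${\varprojlim}^{2}Hom(H_{n-1}(F_\beta),G)=0$, whence ${\varprojlim}^1 H^{n-1}(F_\beta;G)\cong{\varprojlim}^1 Hom(H_{n-1}(F_\beta),G)$, the group $\varprojlim H^n(F_\beta;G)$ is an extension of $Hom(H^s_n(X),G)$ by $\varprojlim Ext(H_{n-1}(F_\beta),G)$, and --- by Proposition 1.2 of \cite{4} --- $Ext(H^s_{n-1}(X),G)$ is an extension of $\varprojlim Ext(H_{n-1}(F_\beta),G)$ by ${\varprojlim}^1 Hom(H_{n-1}(F_\beta),G)$. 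Splicing these inside the $2_{NT}$ sequence should yield the functorial exact sequence $0\to Ext(H^s_{n-1}(X),G)\to H^n(X;G)\to Hom(H^s_n(X),G)\to 0$. With UCF in hand, EFSA becomes routine homological algebra: applying $Hom(H^s_n(X),-)$ and $Ext(H^s_n(X),-)$ to a short exact sequence $0\to G\to G'\to G''\to 0$ produces, for each $n$, a six-term exact sequence, and splicing these with the three UCF sequences for $G$, $G'$ and $G''$ gives the long exact sequence $\cdots\to H^{n-1}(X;G'')\to H^n(X;G)\to H^n(X;G')\to H^n(X;G'')\to\cdots$, natural in $X$ and in the coefficient sequence.

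The step I expect to require the most care is the passage to UCF for $X$: identifying the two outer terms is mechanical, but one must verify that the middle term of the spliced extension really is $Ext(H^s_{n-1}(X),G)$ \emph{as an extension, and functorially in $X$ and $G$}, rather than merely abstractly isomorphic to it --- this rests on the naturality of Proposition 1.2 of \cite{4} together with the vanishing of the relevant higher derived limits. A shortcut that bypasses this point altogether: by Theorem 3.1, $H^*$ is a cohomology theory in the Milnor sense, so by the uniqueness theorem for additive cohomology (Theorem 1.2) it is, for each fixed coefficient group, naturally isomorphic (in the space) to singular cohomology; and singular cohomology plainly satisfies EFSA --- because $Hom(C_*(X),-)$ is exact on the free singular chain complex --- and CSI --- because $Hom(-,G_0)$ is exact and $H^s_*(X)\cong\varinjlim H_*(F_\beta)$ --- so both axioms transport back to $H^*$, which (being an extension of $h^*$ by $1_{NT}$) is then a cohomology theory in the Inasaridze--Mdzinarishvili--Beridze sense.
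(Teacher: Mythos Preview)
Your proposal is correct. For CSI you argue exactly as the paper does: invoke the $2_{NT}$ sequence, use the UCF on each compact $F_\beta$ together with $Ext(-,G_0)=0$ to identify ${\varprojlim}^1 H^{n-1}(F_\beta;G_0)$ with ${\varprojlim}^1 Hom(H_{n-1}(F_\beta),G_0)$, and kill the latter via Proposition~1.2 of \cite{4}. For EFSA the paper takes precisely your ``shortcut'': it appeals to Theorem~3.1 and then to Milnor's uniqueness (Theorem~1.2) to identify $H^*$ with singular cohomology, which is exact in the coefficient variable; it does not attempt your longer route of first establishing UCF on $X$ and then splicing six-term sequences.

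Your longer EFSA argument is a genuine alternative and has the virtue of being self-contained (no appeal to Milnor's uniqueness), at the cost of the delicate functoriality check you flag. Conversely, the shortcut is quick but, as you implicitly note, leans on the isomorphism $H^*(-;G)\cong H^*_s(-;G)$ being natural in $G$ as well as in the space --- a point the paper does not make explicit either.
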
 

\begin{proof} By Theorem 3.1,  $H^*$ is a cohomology theory in the Milnor sense and by theorem 1.2, it is natural isomorphic to the singular $H^*_s$ cohomology and so, it is an exact functor of the second argument. Therefore, to prove the theorem it is sufficient to show that it has compact support for injective coefficients group.
	
For each space $X \in \mathcal{K}^2_\mathcal{CW}$ consider a direct system $ {\bf {F}} = \{ F_{\beta} \}_{\beta \in \mathcal{B}}$ of all compact subspaces. Then, by condition of the theorem, we have the following short exact sequence:
\begin{equation}
\begin{tikzpicture}
\node (A) {0};
\node (B) [node distance=2.5cm, right of=A] {${\varprojlim}^1 H^{n-1}(F_\beta;G)$};
\node (C) [node distance=3.5cm, right of=B] {$  H^n(X;G)$};
\node (D) [node distance=3cm, right of=C] {${\varprojlim} H^{n}(F_\beta;G)$};
\node (E) [node distance=2cm, right of=D] {0.};

\draw[->] (A) to node [above]{}(B);
\draw[->] (B) to node [above]{}(C);
\draw[->] (C) to node [above]{}(D);
\draw[->] (D) to node [above]{}(E);

\end{tikzpicture}
\end{equation}
 Therefore, to prove the theorem, it is sufficient  to show that for each injective group $G$ the first derivative group ${\varprojlim}^1 H_{n-1}(F_\beta; G)$ is trivial. Indeed, by (3.33) and (3.34) we have the following isomorphism:
\begin{equation}
 {\varprojlim}^1 H^n (F_\beta;G) \approx {\varprojlim}^1 Hom(H_n(F_\beta),G).
 \end{equation}
On the other hand, for each direct system $ \{ H_n(F_\beta) \}_{\beta \in \mathcal{B}}$ of abelian groups we have:
\begin{equation}
\begin{tikzpicture}
\node (A) {0};
\node (B) [node distance=3.5cm, right of=A] {${\varprojlim}^1 Hom (H_{n}(F_\beta),G)$};
\node (C) [node distance=5.5cm, right of=B] {$ Ext ({\varinjlim} H_{n}(F_\beta),G)$};
\node (D) [node distance=3.5cm, right of=C] {};

\node (B1) [node distance=2cm, below of=B] {${\varprojlim} Ext (H_{n}(F_\beta),G)$};
\node (C1) [node distance=2cm, below of=C] {${\varprojlim}^2 Hom (H_{n}(F_\beta),G)$};
\node (D1) [node distance=2cm, below of=D] {0.};
\node (A1) [node distance=2cm, below of=A] {};

\draw[->] (A) to node [above]{}(B);
\draw[->] (B) to node [above]{$\psi$}(C);
\draw[->] (C) to node [above]{}(D);

\draw[->] (A1) to node [above]{}(B1);
\draw[->] (B1) to node [above]{}(C1);
\draw[->] (C1) to node [above]{}(D1);

\end{tikzpicture}
\end{equation}
For an injective group $G$ the functor $Ext(-,G)$ is trivial and so by (3.47) we have
\begin{equation}
 ~~ {\varprojlim}^1 Hom(H_{n}(F_\beta),G) \approx 0.
\end{equation}

Therefore, by (3.43), (3.45), (3.46) and (3.48) for each injective group $G$ we have
\begin{equation}
H^n(X;G) \approx \varprojlim H^n (F_ \beta ;G).
\end{equation} 

\end{proof}
By theorems 1.1, 1.6, 2.1, 3.1, 3.2, 3.3 and 3.4 we have: 

\begin{cor} Let $H^*$ be a cohomology theory on the category $\mathcal{K}^2_{\mathcal{CW}}$ with coefficients group $G$ in the Milnor or  Mdzinarishvili or Berikashvili-Mdzinarishvili-Beridze or Inasaridze-Mdzinarishvili-Beridze sense, then for each $(X,A)$ in $\mathcal{K}^2_{\mathcal{CW}}$ there is a natural isomorphism between $H^n(X,A;G)$ and the n-th singular cohomology group $H^n_s(X,A;G)$ of $(X,A)$ with coefficients in $G$. 
\end{cor}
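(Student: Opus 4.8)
The plan is to funnel all four hypotheses into one --- that $H^*$ is a cohomology theory in the Milnor sense --- by walking up the chain of implications established in Section~3, and then to quote the cohomological uniqueness theorem. First I would assemble the reductions: Theorem~3.2 shows that a theory in the Inasaridze--Mdzinarishvili--Beridze sense is one in the Berikashvili--Mdzinarishvili--Beridze sense; Theorem~3.3 shows that the latter is one in the Mdzinarishvili sense; and Theorem~3.1, which invokes Theorem~2.1 to produce the full list of Eilenberg--Steenrod axioms on $\mathcal{K}^2_{\mathcal{CW}}$, shows that a theory in the Mdzinarishvili sense is one in the Milnor sense. Theorem~3.4 closes the cycle, so that in fact all four notions are equivalent, although only the one-way reductions are needed for the corollary. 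Consequently it suffices to prove the statement for a cohomology theory $H^*$ in the Milnor sense.

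Next I would observe that the singular cohomology $H^*_s(-;G)$ is itself a cohomology theory in the Milnor sense on $\mathcal{K}^2_{\mathcal{CW}}$ --- it satisfies the Eilenberg--Steenrod axioms together with the additivity axiom AD --- and then apply Theorem~1.2 (equivalently, the argument of Theorem~1.1 of \cite{8} carried out for cohomology) to obtain, for every pair $(X,A)$ in $\mathcal{K}^2_{\mathcal{CW}}$ and every $n$, a natural isomorphism $H^n(X,A;G)\cong H^n_s(X,A;G)$. Since Theorem~1.2 is stated directly for pairs, no separate relative step is needed; if one prefers to build the transformation on absolute spaces first, exactness of $H^*$ and of $H^*_s$ together with the five lemma carry it to pairs. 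The comparison transformation is the familiar one: it is rigidly determined on $\mathcal{K}^2_{\mathcal{P}ol_C}$ by the Eilenberg--Steenrod uniqueness theorem (the two theories agree on a point by the dimension axiom), Theorem~1.6 upgrades it to an isomorphism on all polyhedron pairs using that $H^*$ and $H^*_s$ have partially compact supports (Definition~1.3, Theorem~1.4), and additivity then propagates it to all of $\mathcal{K}^2_{\mathcal{CW}}$.

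The step where real care is needed --- and where I expect the substantive bookkeeping to lie --- is checking that each of the four hypothesis packages really does land, under the reductions, in a setting with enough structure for Theorem~1.2 to bite: in particular that in the Mdzinarishvili case the data $2_{NT}$ and $3_{NT}$ deliver, via Theorem~2.1, the homotopy, excision and dimension axioms on all of $\mathcal{K}^2_{\mathcal{CW}}$, and that the inverse-limit descriptions attached to the various senses are compatible with the comparison map, so that the five lemma applied to the short exact sequences relating $\varprojlim^1 H^{n-1}(F_\alpha;G)$, $H^n(X;G)$ and $\varprojlim H^n(F_\alpha;G)$ forces an isomorphism in the limit. Once that compatibility is secured, the isomorphism on an arbitrary $(X,A)\in\mathcal{K}^2_{\mathcal{CW}}$ reduces to the already settled cases of one-point spaces and compact polyhedra, which completes the proof.
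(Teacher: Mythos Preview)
Your proposal is correct and follows essentially the same approach as the paper: the paper's proof is the single line ``By theorems 1.1, 1.6, 2.1, 3.1, 3.2, 3.3 and 3.4 we have,'' and your argument is precisely an unpacking of that citation---chain the reductions of Theorems~3.2, 3.3, 3.1 (with Theorem~2.1 supplying the Eilenberg--Steenrod axioms in the Mdzinarishvili case) to land in the Milnor sense, then invoke the cohomological uniqueness Theorem~1.2. Your final paragraph about additional ``bookkeeping'' is unnecessary caution: those compatibility checks are exactly what Theorems~2.1 and 3.1--3.4 already establish, so no further verification is required at the level of the corollary.
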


\subsection*{Acknowledgements}
\
The first author was supported by the institutional scientific research project  of Batumi Shota Rustaveli State University.

\end{document}